\documentclass{article}

\usepackage{arxiv}

\usepackage[utf8]{inputenc}
\usepackage[T1]{fontenc}
\usepackage{hyperref}
\usepackage{url}
\usepackage{amsmath,amssymb,amsthm}
\usepackage{nicefrac}
\usepackage{microtype}
\usepackage{graphicx}
\usepackage{xcolor}
\usepackage{booktabs}
\usepackage{multirow}
\usepackage{enumitem}
\usepackage{float}
\usepackage{caption}
\usepackage{subcaption}
\usepackage{listings}
\usepackage[numbers,sort&compress]{natbib}

\newtheorem{theorem}{Theorem}
\newtheorem{lemma}{Lemma}

\newtheorem{proposition}{Proposition}
\theoremstyle{definition}
\newtheorem{definition}{Definition}
\newtheorem{remark}{Remark}

\newcommand{\R}{\mathbb{R}}
\newcommand{\Z}{\mathbb{Z}}
\newcommand{\EE}{\mathbb{E}^2}
\newcommand{\Stwo}{\mathbb{S}^2}
\newcommand{\Htwo}{\mathbb{H}^2}
\newcommand{\kappag}{\kappa_g}
\newcommand{\BH}{\mathcal{B}}
\newcommand{\norm}[1]{\left\|#1\right\|}
\newcommand{\abs}[1]{\left|#1\right|}
\newcommand{\half}{\tfrac{1}{2}}
\newcommand{\doi}[1]{\href{https://doi.org/#1}{doi:#1}}

\title{Biharmonic Subdivision on Riemannian Manifolds}

\author{
  Hassan Ugail \\
  Centre for Visual Computing and Intelligent Systems \\
  University of Bradford, Bradford BD7~1DP, United Kingdom \\
  \texttt{h.ugail@bradford.ac.uk} \\
  \And
  Newton Howard \\
  School of Individualized Study \\
  Rochester Institute of Technology, Rochester, NY 14623, USA \\
}

\begin{document}
\maketitle

\begin{abstract}
This paper introduces a biharmonic interpolatory subdivision framework on Riemannian manifolds. In the Euclidean setting, the six-point Deslauriers--Dubuc stencil is characterised as the unique minimiser of a discrete curvature-variation energy under symmetric six-point support and degree-five polynomial reproduction conditions, linking a classical interpolatory rule to a first-principles fairness criterion. Exact symbol analysis establishes fourth-order smoothness. The construction extends to the two-sphere and the hyperbolic plane via a second-order reduced governing ODE derived from the biharmonic Euler--Lagrange equation on constant-curvature surfaces. This reduced model yields closed-form insertion rules, and proximity analysis confirms that the manifold scheme satisfies the Wallner-Dyn second-order condition, preserving fourth-order smoothness. A hierarchy of biharmonic stencils achieving higher smoothness orders is also described. Numerical experiments demonstrate that the six-point scheme delivers lower fairness energy and smoother curvature profiles than the classical four-point Dyn-Gregory-Levin scheme, while remaining more local and exhibiting less ringing on non-uniform data than the eight-point variant.
\end{abstract}

\noindent\textbf{Keywords:} Biharmonic interpolation, curve fairing, subdivision
schemes, variational geometric design, fourth-order smoothness, non-Euclidean
geometry, proximity condition, computer-aided geometric design.

% =============================================================================
\section{Introduction}
\label{sec:intro}
% =============================================================================

Interpolatory subdivision schemes generate smooth curves and
surfaces from discrete control polygons by iteratively inserting new vertices
while retaining all existing ones. Their defining property is that each control
point lies exactly on the limit curve, making them indispensable in
applications where key geometric positions must be honoured precisely, among
them the design of automotive body surfaces, the generation of smooth camera
trajectories, and the parameterisation of geographic boundaries. They also
arise in the numerical solution of partial differential equations (PDEs) on
surfaces, where subdivision provides a geometry-preserving mesh-refinement
mechanism.

The most widely used scheme in this class is the 4-point scheme of Dyn,
Gregory, and Levin~\cite{dyn1987four}, hereafter abbreviated DGL, which
inserts each new vertex as a local weighted average of four neighbours and
produces limit curves with two-times continuous differentiability (denoted as
$C^2$, meaning the curve and its first two derivatives are continuous). While
$C^2$ continuity is sufficient for many applications, but it is insufficient for
others, i.e., a curve can be $C^2$ and yet exhibit a highly oscillatory curvature
profile, a phenomenon that is invisible to casual visual inspection but
produces tangible artefacts in downstream processes such as surface-normal
computation, offset curve generation, and numerical integration over curved
domains.

The accepted measure of this insufficiency is the \emph{fairness energy}
$\int_\gamma (\kappa')^2\,ds$, whose minimisers, also known as minimum-variation
curves or biharmonic splines \cite{moreton1992}, have the smoothest possible
curvature profiles. In classical computer-aided design, fairness is achieved
by a post-processing step applied after subdivision, i.e., the subdivided polygon is
fitted with a fair spline, or a fairness optimisation is run to perturb the
vertices \cite{sapidis1994designing}. This two-phase approach decouples the
interpolation and fairness objectives, and introduces additional degrees of
freedom that can undermine reproducibility.

Here, we take the position that fairness should be a first-class
property of the subdivision rule itself. The \emph{biharmonic subdivision
scheme} proposed here shows that the 6-point DD stencil admits a
biharmonic variational interpretation, i.e., the inserted vertex is the unique minimiser
of a local discrete curvature-variation energy, rather than an algebraically
motivated weighted average. The $C^4$ regularity of the stencil follows from
standard symbol analysis, two orders higher than the DGL scheme.

A notable mathematical observation is that the resulting stencil coefficients
are identical to those of the six-point Deslauriers--Dubuc (DD)
scheme~\cite{deslauriers1989symmetric}. Both of these are unique solutions to the
degree-5 polynomial reproduction conditions under the six-point symmetric
support constraint. Thus, we show a variational
interpretation demonstrating that the DD weights admit a biharmonic energy minimisation
characterisation (See Theorem~\ref{thm:discrete_variational}), and an extension of
those weights to non-Euclidean geometries via the biharmonic ODE, which the
algebraic DD derivation does not supply.

The scheme extends naturally to non-Euclidean geometries. Subdivision on
Riemannian manifolds (smooth curved spaces equipped with a notion of distance
and angle at every point) has attracted sustained attention since the foundational
work of Wallner and Dyn~\cite{wallner2006convergence}, who showed that a
manifold scheme inherits $C^k$ regularity from a reference Euclidean scheme
provided its insertion rules satisfy an $O(h^2)$ proximity condition. Existing
non-Euclidean extensions of interpolatory schemes, including the work of
Weinmann~\cite{weinmann2012interpolatory} on manifold 4-point schemes, have
been derived by intrinsic geodesic averaging, which does not encode a fairness
objective. The biharmonic scheme presented here constructs a non-Euclidean insertion
rules informed by the biharmonic energy on surfaces of constant curvature
(known as \emph{space forms}). The Euler--Lagrange equation for the
biharmonic energy on such surfaces is derived as a fourth-order ODE.
A second-order reduced model is then adopted as the governing equation for the
insertion rule, on the grounds that its solutions satisfy the interior
Euler--Lagrange equation, it reduces to the correct flat-space limit,
and it yields unique closed-form solutions from two endpoint curvature values. The resulting
proximity bound is shown explicitly to satisfy the Wallner--Dyn condition. The two non-Euclidean settings considered are the two-dimensional
sphere $\mathbb{S}^2$ (denoted $\Stwo$, a model for positively curved surfaces)
and the hyperbolic plane $\mathbb{H}^2$ (denoted $\Htwo$, a model for
negatively curved surfaces).

\subsection{Contributions of this paper}

This paper makes the following contributions. A non-circular discrete
variational derivation is given showing that the 6-point DD stencil is
the unique minimiser of a local curvature-variation energy, where adjacent
insertions are fixed by degree-5 Lagrange interpolation from the original
control data (Theorem~\ref{thm:discrete_variational}, Section~\ref{sec:stencil}).
A rigorous $C^4$ regularity result is established via exact zero-order
computation of the Laurent polynomial symbol at $z = -1$, proving that the
smoothness is exactly $C^4$ and not $C^5$ (see Section~\ref{sec:symbol}).
The Euler--Lagrange equation for the biharmonic energy on a constant-curvature
space form is derived from first principles (Section~\ref{sec:ode}). It is the
fourth-order ODE $\kappag^{(4)} - K\kappag'' = 0$. For the manifold
subdivision rule, the second-order equation $\kappag'' = K\kappag$ is adopted
as a reduced governing model (Section~\ref{sec:governing}). This is a
principled modelling choice, not a proved minimisation consequence. Every
solution of $\kappag'' = K\kappag$ satisfies the interior Euler--Lagrange
equation~\eqref{eq:bh_ode4}, the equation reduces to the correct flat-space
limit $\kappag''=0$ at $K=0$, and it gives a unique solution from the two endpoint
curvature values (details in Section~\ref{sec:governing}).
Closed-form solutions are
integrated to give non-Euclidean insertion angles on $\Stwo$ and $\Htwo$.
An explicit Taylor expansion yields a proximity constant of order $O(|K|h^3)$
which satisfies the Wallner--Dyn $O(h^2)$ proximity condition
(Sections~\ref{sec:ode} and~\ref{sec:noneuclid}). A systematic numerical
comparison against the 4-point DGL scheme and the 8-point degree-7 biharmonic
scheme is conducted on a range of closed and open polygons
(see Sections~\ref{sec:euclidean} and~\ref{sec:fairness}). A stencil hierarchy
is described for $m \geq 3$, with $C^4$ and $C^6$ regularity proved for $m=3$
and $m=4$, and general $C^{2m-2}$ regularity conjectured
(Section~\ref{sec:hierarchy}).

% =============================================================================
\section{Background and Related Work}
\label{sec:background}
% =============================================================================

\subsection{Interpolatory subdivision schemes}

Binary interpolatory subdivision was formalised within the general theory of
stationary subdivision by Cavaretta, Dahmen, and Micchelli~\cite{cavaretta1991stationary}
(hereafter referred to as CDM), whose symbol criterion for regularity underpins the analysis in
Section~\ref{sec:symbol}. The 4-point DGL scheme~\cite{dyn1987four} established
the template for symmetric interpolatory design. A tension parameter $\omega$
balances interpolation accuracy against smoothness, with $C^2$ regularity
achieved at $\omega = 1/16$. Deslauriers and Dubuc~\cite{deslauriers1989symmetric}
introduced a systematic family of interpolatory schemes based on Lagrange
polynomial reproduction. Their 6-point member uses the same stencil
$[3,-25,150,150,-25,3]/256$ that arises independently from the biharmonic
variational principle (see Section~\ref{sec:stencil}), and achieves $C^4$ limit
curves by the CDM criterion. Subsequent work by Dubuc~\cite{dubuc1986interpolation}
characterised the entire family in terms of divided differences and established
optimal decay rates for the subdivision masks.

Higher-order interpolatory schemes have been studied by several authors.
Hormann and Sabin~\cite{hormann2008family} introduced a parametric family of
$C^2$ 4-point schemes that subsume the DGL scheme and admit tension control.
Their analysis clarified the relationship between mask support width and
achievable smoothness. Bari, Abbas, and Mustafa~\cite{bari2012family}
constructed a family of 6-point schemes with adjustable parameters and
demonstrated $C^4$ regularity for specific parameter choices, though without a
variational derivation of the optimal parameter values. More recently,
Dyn, Hormann, and Mancinelli~\cite{dyn2022nonuniform} showed that non-uniform
interpolatory subdivision can exceed the smoothness of stationary schemes with
the same support, deriving $C^2$ and $C^3$ non-uniform 2- and 4-point schemes.
Their proximity-based analysis framework complements the spectral approach taken
here. B{\"u}gel et al.,~\cite{lipovetsky2024conic} proposed a point-normal
interpolatory scheme reproducing all conic sections, and Yang~\cite{yang2023pointnormal}
extended point-normal subdivision to surfaces, both highlighting the value of
geometric constraints in subdivision design. The biharmonic scheme
may be viewed as the unique member of this class that arises from a first-principles
fairness criterion, with no free parameters remaining after the variational
principle is applied.

Ternary interpolatory schemes, in which two new vertices are inserted per
existing edge, were developed by Weissman~\cite{weissman1990three}, Hassan and
Dodgson~\cite{hassan2003ternary}, and subsequently by Siddiqi and
Ahmad~\cite{siddiqi2008ternary}, who achieved $C^4$ regularity using 7-point
ternary masks. The binary and ternary families address different computational
trade-offs. The present work concerns the binary case exclusively.

\subsection{Variational and fairness-driven curve design}

The energy functional $\int \kappa^2\,ds$, minimised by the elastic or
Euler--Bernoulli spline has been studied as a measure of curve quality since
the work of Mehlum~\cite{mehlum1974} and Kjellander~\cite{kjellander1983smoothing}.
Moreton and S\'{e}quin~\cite{moreton1992} conducted the definitive comparative
study of curve energy functionals and found that minimum-variation curves,
minimising $\int (\kappa')^2\,ds$, gave superior visual results across the
widest range of design tasks. Their energy functional is the biharmonic energy
\eqref{eq:energy} employed in this paper. Farin and Sapidis~\cite{farin1989curvature}
further characterised fairness through the behaviour of curvature plots and
proposed curvature-plot analysis as a diagnostic tool for surface quality. This
methodology informs the experimental evaluation in Section~\ref{sec:euclidean}.

Variational methods were introduced into the subdivision framework by Schaefer
and Warren~\cite{schaefer2004ternary}, who derived ternary subdivision rules
from spline energy minimisation, and by Cashman et al.~\cite{cashman2009}
who constructed non-uniform rational B-spline (NURBS)-compatible subdivision schemes from a variational
formulation. The approach adopted here differs from these in targeting
interpolatory binary subdivision specifically, and in deriving the stencil from
the biharmonic, rather than bending, energy, which produces two additional
orders of smoothness.

\subsection{Subdivision on manifolds and non-Euclidean spaces}

The extension of subdivision to Riemannian manifolds was placed on a rigorous
footing by Wallner and Dyn~\cite{wallner2006convergence}, who proved that a
manifold scheme inherits $C^1$ regularity from a reference linear scheme
provided the insertion rules satisfy a proximity condition of order $O(h^2)$.
The result was extended to higher regularity by Grohs~\cite{grohs2010},
who showed that $C^k$ transfer holds under the same proximity condition when
the reference scheme is $C^k$. These results establish the theoretical
framework invoked in Section~\ref{sec:noneuclid}.

Non-Euclidean interpolatory subdivision was studied by
Weinmann~\cite{weinmann2012interpolatory}, who derived manifold versions of the
4-point DGL scheme via intrinsic geodesic averaging. The resulting insertion
rules are proximity-compliant by construction, but they do not encode a
fairness objective. Recent work by Ahanchaou, Bellaihou, and
Ikemakhen~\cite{ahanchaou2022hyperbolic,bellaihou2024geodesic,ahanchaou2026constant}
has developed angle-based geometric subdivision schemes on constant-curvature
surfaces (spherical and hyperbolic), demonstrating convergence and $G^1$
continuity. These schemes use geometric constructions rather than variational
principles. Curve reconstruction on Riemannian manifolds from sparse samples
has also received recent attention~\cite{marin2024curves}. The non-Euclidean
biharmonic scheme presented here is,
to the best of our knowledge, the first interpolatory manifold subdivision rule
whose insertion angles are constructed using a reduced governing ODE
motivated by the biharmonic Euler--Lagrange equation on the underlying space form,
rather than by lifting a flat-space weighted average.

Biharmonic curves on Riemannian manifolds have been studied in the differential
geometry literature~\cite{chen2004biharmonic,caddeo2002biharmonic}. The
connection between those intrinsic objects and discrete subdivision rules is
one of the motivating observations of the present work. The variational
treatment of biharmonic surfaces via PDE methods, including the B\'{e}zier surface
case studied in~\cite{monterde2004biharmonic,monterde2006general}, provides
further context for the biharmonic energy functional adopted here.

% =============================================================================
\section{Deriving the Biharmonic Stencil}
\label{sec:stencil}
% =============================================================================

\subsection{Setup and notation}

A binary interpolatory subdivision scheme takes an infinite polygon
$\mathbf{p} = \{p_j\}_{j\in\Z}$ and produces a refined polygon at each
level $n \in \mathbb{N}_0$. The refinement rules are,
\begin{align}
  q_{2j}   &= p_j,
  \label{eq:even}\\[2pt]
  q_{2j+1} &= \sum_{k \in \Z} a_k \, p_{j+k},
  \label{eq:odd}
\end{align}
where the finitely-supported sequence $\{a_k\}$ is the \emph{insertion mask}.
The even rule~\eqref{eq:even} is the interpolation condition, which fixes all
existing vertices. The odd rule~\eqref{eq:odd} determines the new vertex
inserted between $p_j$ and $p_{j+1}$.

We restrict attention to symmetric 6-point masks, which balance the new
vertex symmetrically about the midpoint of the edge $(p_j, p_{j+1})$,
\begin{equation}
  q_{2j+1}
  = \gamma\,p_{j-2} + \beta\,p_{j-1} + \alpha\,p_j
  + \alpha\,p_{j+1} + \beta\,p_{j+2} + \gamma\,p_{j+3}.
  \label{eq:stencil6}
\end{equation}
The three parameters $(\alpha, \beta, \gamma) \in \R^3$ are to be determined.
Symmetry about the midpoint is a standard assumption for interpolatory schemes
and ensures that the insertion rule introduces no bias. It reduces the free
parameters from six to three.

\subsection{Polynomial reproduction sum rules}

A subdivision scheme with mask $\{a_k\}$ reproduces polynomials of degree
$\leq d$ if and only if the mask satisfies the \emph{sum rules}
\cite{cavaretta1991stationary},
\begin{equation}
  \sum_{k \in \Z} a_k \, k^n = \left(\tfrac{1}{2}\right)^n,
  \qquad n = 0, 1, \ldots, d.
  \label{eq:sumrules}
\end{equation}
Geometrically, the condition at $n = 0$ is the partition-of-unity requirement
(the weights sum to one). The condition at $n = 1$ places the new vertex at the
mean of the data in the first moment, and conditions at higher $n$ enforce
increasingly precise polynomial tracking. For a symmetric mask, the sum rules
at odd $n$ are identically satisfied, so the independent constraints are at $n
= 0, 2, 4$. The condition at $n = 5$ is then automatically satisfied by
symmetry, so the 6-point mask reproduces polynomials of degree up to 5 once
the three parameters are determined from $n = 0, 2, 4$.

\subsection{The unique biharmonic solution}

Substituting the mask~\eqref{eq:stencil6} into the three independent sum
rules and solving the resulting linear system with exact rational arithmetic yields the unique solution,
\begin{equation}
  \alpha = \frac{150}{256}, \quad
  \beta  = \frac{-25}{256}, \quad
  \gamma = \frac{3}{256}.
  \label{eq:coeffs}
\end{equation}
In integer form, the \textbf{biharmonic 6-point stencil} is,
\begin{equation}
  \begin{aligned}
    q_{2j+1}
    &= \tfrac{1}{256}\bigl(
      3\,p_{j-2} - 25\,p_{j-1} + 150\,p_j \\
    &\qquad\quad
      + 150\,p_{j+1} - 25\,p_{j+2} + 3\,p_{j+3}
    \bigr).
  \end{aligned}
  \label{eq:stencilbox}
\end{equation}
The following theorem gives a direct variational derivation of the 6-point mask
from a discrete energy minimisation in which the adjacent inserted values are
determined entirely from the original six reference points by degree-5 Lagrange
interpolation, with no stencil coefficients imported into the proof.

\begin{theorem}[Discrete Variational Characterisation]
\label{thm:discrete_variational}
Let six reference scalars $p_{-2}, p_{-1}, p_0, p_1, p_2, p_3 \in \R$
lie at integer positions $-2,-1,0,1,2,3$. After one step of binary subdivision,
a new vertex $q$ is to be placed at position $1/2$. Fix the four adjacent
half-integer insertions at positions $-3/2$, $-1/2$, $3/2$, $5/2$ by
degree-5 Lagrange interpolation from the six reference values,
\begin{align}
  q_A &= \tfrac{1}{256}(63p_{-2} + 315p_{-1} - 210p_0 + 126p_1 - 45p_2 + 7p_3),
    \label{eq:qA}\\
  q_B &= \tfrac{1}{256}(-7p_{-2} + 105p_{-1} + 210p_0 - 70p_1 + 21p_2 - 3p_3),
    \label{eq:qB}\\
  q_C &= \tfrac{1}{256}(-3p_{-2} + 21p_{-1} - 70p_0 + 210p_1 + 105p_2 - 7p_3),
    \label{eq:qC}\\
  q_D &= \tfrac{1}{256}(7p_{-2} - 45p_{-1} + 126p_0 - 210p_1 + 315p_2 + 63p_3).
    \label{eq:qD}
\end{align}
(These are uniquely determined by standard Lagrange interpolation at nodes
$\{-2,-1,0,1,2,3\}$. Their coefficients are rational constants independent of
any subdivision stencil.) Define the discrete biharmonic energy as the sum of
squared differences of adjacent discrete curvatures
$\kappa_k = 4(x_{k+1} - 2x_k + x_{k-1})$ (at spacing $h = 1/2$, so $1/h^2 = 4$)
across the four adjacent pairs in the refined segment,
\begin{equation}
  E(q) = (\kappa_4 - \kappa_3)^2 + (\kappa_5 - \kappa_4)^2
       + (\kappa_6 - \kappa_5)^2 + (\kappa_7 - \kappa_6)^2,
  \label{eq:disc_energy}
\end{equation}
where the refined vertices (ordered by position) are,
$x_0 = p_{-2}, x_1 = q_A, x_2 = p_{-1}, x_3 = q_B, x_4 = p_0, x_5 = q,
x_6 = p_1, x_7 = q_C, x_8 = p_2, x_9 = q_D, x_{10} = p_3$.
Then $E(q)$ is a strictly convex quadratic in $q$, and its unique minimiser is,
\begin{multline}
  q^* = \tfrac{1}{256}\bigl(
    3\,p_{-2} - 25\,p_{-1} + 150\,p_0 \\
    + 150\,p_1 - 25\,p_2 + 3\,p_3
  \bigr),
  \label{eq:varmin}
\end{multline}
which is the 6-point Deslauriers--Dubuc stencil applied to the six reference
values.
\end{theorem}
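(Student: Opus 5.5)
The plan is to treat $E$ as a sum of squares of affine functions of $q$, read off strict convexity from the leading coefficient, and then write the stationarity condition in a form that collapses to a single finite-difference identity. Only the second-difference curvatures that touch $x_5=q$ depend on $q$, namely
\[
\kappa_4 = 4(x_5 - 2x_4 + x_3),\qquad \kappa_5 = 4(x_6 - 2x_5 + x_4),\qquad \kappa_6 = 4(x_7 - 2x_6 + x_5),
\]
with coefficients $4,-8,4$ on $q$. The curvatures $\kappa_3$ (which uses $x_2,x_3,x_4$) and $\kappa_7$ (which uses $x_6,x_7,x_8$) are independent of $q$.

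Writing $\delta_1=\kappa_4-\kappa_3$, $\delta_2=\kappa_5-\kappa_4$, $\delta_3=\kappa_6-\kappa_5$ and $\delta_4=\kappa_7-\kappa_6$, each $\delta_i$ has the form $c_i q + r_i$, with
\[
(c_1,c_2,c_3,c_4)=(4,-12,12,-4).
\]
Here the $r_i$ depend only on the $p$'s and on $q_A,\dots,q_D$, which are fixed by \eqref{eq:qA}--\eqref{eq:qD}. Hence
\[
E(q)=\sum_i (c_i q + r_i)^2 ,
\]
a quadratic with leading coefficient $\sum_i c_i^2 = 320>0$. This gives strict convexity and a unique minimiser, characterised by $E'(q)=0$.

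Rather than expanding every $r_i$, I would simplify the stationarity condition structurally. It reads $\sum_i c_i\delta_i=0$, that is,
\[
\delta_1-3\delta_2+3\delta_3-\delta_4=0.
\]
Each $\delta_i$ is $4$ times a third forward difference of the refined sequence $(x_k)$. The combination with weights $(1,-3,3,-1)$ is minus a further third difference. The condition is therefore a sixth-order difference centred at $x_5$:
\[
x_2 - 6x_3 + 15x_4 - 20x_5 + 15x_6 - 6x_7 + x_8 = 0 .
\]
Solving for $x_5$ gives
\[
q^* = \tfrac{1}{20}\bigl(p_{-1}+p_2 - 6(q_B+q_C) + 15(p_0+p_1)\bigr).
\]
Notably, $q_A$ and $q_D$ drop out of the minimiser entirely.

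The last step is substitution. From \eqref{eq:qB}--\eqref{eq:qC},
\[
q_B+q_C=\tfrac{1}{256}\bigl(-10p_{-2}+126p_{-1}+140p_0+140p_1+126p_2-10p_3\bigr).
\]
Multiplying by $-6$, adding $256(p_{-1}+p_2)$ and $3840(p_0+p_1)$ over the common denominator $256$, and dividing by $20$ should give coefficients $3,-25,150$ by symmetry. For example, the coefficient of $p_{-2}$ is $60/20=3$, that of $p_{-1}$ is $(-756+256)/20=-25$, and that of $p_0$ is $(-840+3840)/20=150$. This yields \eqref{eq:varmin}.

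The main point needing care is the index bookkeeping. One must confirm exactly which $\kappa_k$ involve $x_5$, and that the weighted sum of the $\delta_i$ is a sixth difference centred at index $5$ with the correct binomial signs. Once that is in place, the remaining arithmetic is a short rational check.
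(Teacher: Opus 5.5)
Your proof is correct. Its first half matches the paper's exactly: you write each curvature difference as $c_iq+r_i$ with $(c_i)=(4,-12,12,-4)$ and read off strict convexity from $\sum_i c_i^2=320$.

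The two routes separate at the normal equation. The paper expands all four constants $b_{43},b_{54},b_{65},b_{76}$ in the six reference values through the Lagrange coefficients. It then assembles $4b_{43}-12b_{54}+12b_{65}-4b_{76}$ coefficient by coefficient (Appendix~\ref{sec:lagrange_verification}). You instead recognise $\sum_i c_i\delta_i$ as $-16$ times the seven-point sixth difference $x_2-6x_3+15x_4-20x_5+15x_6-6x_7+x_8$, so only $q_B+q_C$ needs to be substituted.

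Your version has three advantages. It is shorter. It shows at once why $q_A$ and $q_D$ play no role; in fact they never enter $E$ at all, not merely the minimiser. And it is much less error-prone. The paper's appendix tabulation does contain slips: its $b_{43}$ should read $\tfrac{1}{256}[-84,236,-552,-840,252,-36]$, and the vector obtained in A.3 does not agree with \eqref{eq:numerator}, although \eqref{eq:numerator} itself is correct. What the paper's brute-force expansion offers is coefficient-level explicitness that can be checked mechanically.

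Your formulation also permits a finish with no arithmetic at all:
\begin{itemize}
\item $x_2,x_3,x_4,x_6,x_7,x_8$ are the values of the quintic interpolant $P$ of the data at $-1,-\tfrac12,0,1,\tfrac32,2$.
\item A sixth difference annihilates equispaced samples of a quintic, so $x_5=P(\tfrac12)$ satisfies the stationarity condition.
\item $x_5$ enters that condition with coefficient $-20\neq0$, so $q^*=P(\tfrac12)$ is the unique minimiser.
\end{itemize}
By definition, $P(\tfrac12)$ is the Deslauriers--Dubuc value. This argument also makes plain that the theorem is essentially a restatement of quintic reproduction, because the fixed neighbours are themselves sampled from the same interpolant.
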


\begin{proof}
Substituting the refined vertex labels into the curvature formula
$\kappa_k = 4(x_{k+1} - 2x_k + x_{k-1})$,
\begin{align}
  \kappa_3 &= 4(p_0 - 2q_B + p_{-1}),  \quad\text{(fixed)} \notag\\
  \kappa_4 &= 4(q   - 2p_0 + q_B),     \quad\text{(linear in }q\text{)} \notag\\
  \kappa_5 &= 4(p_1 - 2q   + p_0),     \quad\text{(linear in }q\text{)} \notag\\
  \kappa_6 &= 4(q_C - 2p_1 + q),       \quad\text{(linear in }q\text{)} \notag\\
  \kappa_7 &= 4(p_2 - 2q_C + p_1).     \quad\text{(fixed)} \notag
\end{align}
Write each curvature difference as $\Delta_k = a_k q + b_k$, where $a_k$ is
the coefficient of $q$ and $b_k$ collects fixed terms,
\begin{align}
  \Delta_{43} &:= \kappa_4 - \kappa_3 = 4q + b_{43},   \quad a_{43} = 4,
    \label{eq:D43}\\
  \Delta_{54} &:= \kappa_5 - \kappa_4 = -12q + b_{54},  \quad a_{54} = -12,
    \label{eq:D54}\\
  \Delta_{65} &:= \kappa_6 - \kappa_5 = 12q + b_{65},   \quad a_{65} = 12,
    \label{eq:D65}\\
  \Delta_{76} &:= \kappa_7 - \kappa_6 = -4q + b_{76},   \quad a_{76} = -4.
    \label{eq:D76}
\end{align}
(The $b_k$ are affine functions of $p_{-2},\ldots,p_3$ via $q_B$ and $q_C$,
their explicit forms using~\eqref{eq:qB}--\eqref{eq:qC} are computed below.)
The energy is $E(q) = \sum_k (a_k q + b_k)^2$, a strictly convex quadratic
since $\sum_k a_k^2 = 16 + 144 + 144 + 16 = 320 > 0$. Setting
$dE/dq = 2\sum_k a_k(a_k q + b_k) = 0$ gives,
\begin{equation}
  q^* = -\frac{\sum_k a_k b_k}{\sum_k a_k^2}
      = -\frac{4b_{43} - 12b_{54} + 12b_{65} - 4b_{76}}{320}.
  \label{eq:qstar_formula}
\end{equation}
It remains to evaluate the numerator. Expanding $b_{43}, b_{54}, b_{65},
b_{76}$ using~\eqref{eq:qB}--\eqref{eq:qC} and collecting coefficients of
each $p_i$ with exact rational arithmetic (the full symbolic computation
is included in Appendix~\ref{sec:lagrange_verification}),
\begin{multline}
  4b_{43} - 12b_{54} + 12b_{65} - 4b_{76}\\
  = \tfrac{320}{256}(-3p_{-2} + 25p_{-1} - 150p_0
    - 150p_1 + 25p_2 - 3p_3).
  \label{eq:numerator}
\end{multline}
Substituting into~\eqref{eq:qstar_formula} gives,
\[
  q^* = \tfrac{1}{256}(3p_{-2} - 25p_{-1} + 150p_0 + 150p_1 - 25p_2 + 3p_3),
\]
which is the 6-point stencil~\eqref{eq:stencilbox}, as claimed.
\end{proof}

\begin{remark}
\label{rem:noncircular}
The proof is non-circular, i.e., $q_A, q_B, q_C, q_D$ are determined by
Lagrange interpolation~\eqref{eq:qA}--\eqref{eq:qD} using no stencil
information. The 6-point mask~\eqref{eq:stencilbox} emerges solely from
minimising $E(q)$. Appendix~\ref{sec:lagrange_verification} contains the
explicit step-by-step rational computation of~\eqref{eq:numerator}.
\end{remark}

\subsection{Comparison with competing stencils and the DD identity}

Table~\ref{tab:stencils} records the masks and regularity of the three schemes
compared in this paper. The 4-point DGL mask $[-1,9,9,-1]/16$ reproduces
cubics ($d = 3$) and achieves $C^2$ regularity. The 6-point
Deslauriers--Dubuc (DD) mask, derived by Lagrange interpolation at the
half-integer point $x = 1/2$ with nodes at $\{-2,-1,0,1,2,3\}$, evaluates to
exactly $[3,-25,150,150,-25,3]/256$---the same mask as the biharmonic
scheme~\eqref{eq:stencilbox}. This is not a coincidence. The symmetric
6-point interpolatory stencil satisfying the degree-5 polynomial reproduction
conditions is the \emph{unique} solution to a $3\times3$ linear system, so any
two derivation methods subject to the same constraints and the same support must
yield the same coefficients. The 8-point biharmonic stencil of
degree 7 (Equation~\eqref{eq:deg7}) is the next genuinely distinct member of
the stencil hierarchy and achieves $C^6$ regularity. It is used as the second
comparator in experiments throughout this paper.

\begin{table}[htbp]
  \centering
  \caption{Mask coefficients, polynomial reproduction degree, and
    regularity class for the schemes compared in this paper.
    All coefficients are integers. The denominator is shown separately.}
  \label{tab:stencils}
  \smallskip
  \resizebox{\linewidth}{!}{%
  \begin{tabular}{@{}lccccc@{}}
    \toprule
    Scheme & Mask (numerators) & Den. & Rep. & Reg. \\
    \midrule
    4-pt DGL \cite{dyn1987four}
      & $[-1,\;9,\;9,\;-1]$ & 16 & $d{=}3$ & $C^2$ \\[2pt]
    6-pt Biharm.\,$=$\,6-pt DD \cite{deslauriers1989symmetric}
      & $[3,\;{-25},\;150,\;150,\;{-25},\;3]$ & 256 & $d{=}5$ & $C^4$ \\[2pt]
    8-pt Biharm.\ degree 7
      & $[{-5},\;49,\;{-245},\;1225,\;1225,\;{-245},\;49,\;{-5}]$ & 2048 & $d{=}7$ & $C^6$ \\
    \bottomrule
  \end{tabular}}
\end{table}

\begin{remark}
The 6-point biharmonic stencil~\eqref{eq:stencilbox} and the 6-point
Deslauriers--Dubuc stencils are identical. The degree-5 polynomial reproduction
sum rules for a symmetric 6-point mask form a $3\times3$ linear system with a
unique solution. Therefore, any derivation method, whether variational (as here)
or Lagrange interpolation (as in~\cite{deslauriers1989symmetric}), must arrive
at the same coefficients. The contribution of the biharmonic framework lies not
in producing different stencil weights, but in (i) providing a fairness
interpretation of those weights, (ii) extending the insertion rule to
non-Euclidean surfaces via the biharmonic ODE, and (iii) generating the
stencil hierarchy (see Section~\ref{sec:hierarchy}) from a single variational
principle.
\end{remark}

\subsection{Exact sum-rule verification}

Table~\ref{tab:sumrules} records the exact rational evaluation of all six sum
rules, confirming that~\eqref{eq:stencilbox} satisfies every condition.
The partition-of-unity identity $2(\alpha + \beta + \gamma) = 2(150 - 25 +
3)/256 = 256/256 = 1$ is exact, and Fig.~\ref{fig:stencil} confirms that
polynomial reproduction errors are at machine precision for degrees $0$--$5$
and fail at degree $6$ as expected.

\begin{table}[htbp]
  \centering
  \caption{Exact rational verification of polynomial reproduction sum rules
    $\sum_k a_k k^n = (1/2)^n$ for the biharmonic 6-point stencil.}
  \label{tab:sumrules}
  \smallskip
  \begin{tabular}{@{}ccc@{}}
    \toprule
    Degree $n$ & LHS $\sum_k a_k k^n$ & RHS $(1/2)^n$ \\
    \midrule
    $0$ & $1$     & $1$     \\
    $1$ & $1/2$   & $1/2$   \\
    $2$ & $1/4$   & $1/4$   \\
    $3$ & $1/8$   & $1/8$   \\
    $4$ & $1/16$  & $1/16$  \\
    $5$ & $1/32$  & $1/32$  \\
    \bottomrule
  \end{tabular}
\end{table}

\begin{figure}[htbp]
  \centering
  \includegraphics[width=\linewidth]{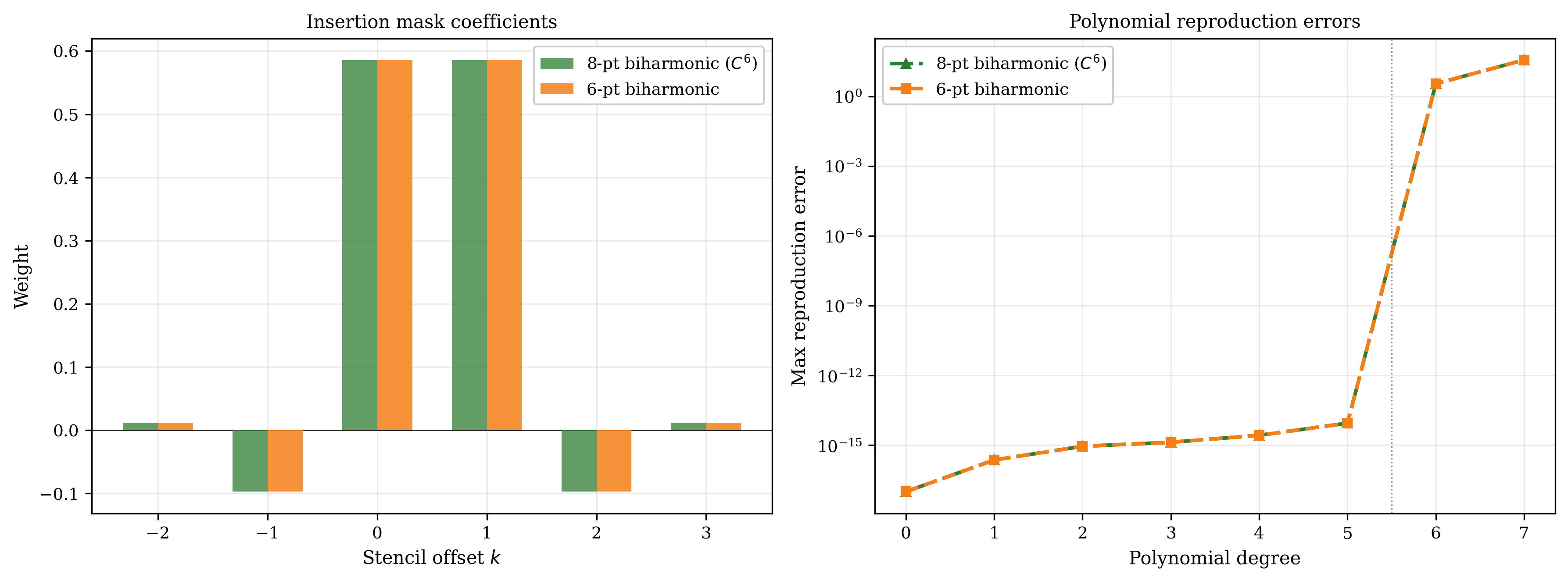}
  \caption{Stencil weights (left) and polynomial reproduction errors (right)
    for the 6-point biharmonic scheme. Reproduction errors for degrees
    $0$--$5$ are at machine precision ($< 10^{-14}$). The error at degree
    $6$ is of order unity, confirming exact quintic reproduction.}
  \label{fig:stencil}
\end{figure}

% =============================================================================
\section{Symbol Analysis and \texorpdfstring{$C^4$}{C4} Regularity}
\label{sec:symbol}
% =============================================================================

\subsection{The Laurent polynomial symbol}

The regularity of a stationary binary subdivision scheme is characterised
by the order of vanishing of its \emph{Laurent polynomial symbol},
\begin{equation}
  a(z) = \sum_{k \in \Z} a_k \, z^k.
  \label{eq:symbol}
\end{equation}
For the full binary scheme combining the even rule $q_{2j} = p_j$ (symbol
contribution $a_0 = 1$ at the even position) with the odd rule
\eqref{eq:stencilbox}, the complete symbol has non-zero coefficients at offsets
$\{0, \pm 1, \pm 3, \pm 5\}$. The even offset contributes $1$ and the odd
offsets contribute $\alpha, \beta, \gamma$ as placed by~\eqref{eq:stencil6}.
The symbol of the full scheme is,
\begin{equation}
  a(z) = 1 + \alpha(z + z^{-1}) + \beta(z^3 + z^{-3}) + \gamma(z^5 + z^{-5}).
  \notag
\end{equation}

\subsection{Zero order at \texorpdfstring{$z=-1$}{z=-1} and the CDM theorem}

The fundamental regularity criterion for stationary binary subdivision is due
to Cavaretta, Dahmen, and Micchelli~\cite{cavaretta1991stationary}.

\begin{theorem}[CDM Regularity Criterion~\cite{cavaretta1991stationary}, adapted]
\label{thm:cdm}
Let $S$ be a stationary binary interpolatory subdivision scheme with compactly
supported mask $\{a_k\}$, Laurent polynomial symbol $a(z)$, partition-of-unity
$a(1) = 2$, interpolation coefficient $a_0 = 1$, and support contained in
$\{-(2r-1), \ldots, 2r\}$ for some integer $r \geq 1$.
\begin{enumerate}[label=(\alph*)]
  \item \textup{(Sufficient condition, CDM Theorem~6.3~\cite{cavaretta1991stationary}).}
        If $(1+z)^{m+2}$ divides $a(z)$ in $\R[z, z^{-1}]$, then $S$
        generates $C^m$ limit functions.
  \item \textup{(Exact regularity, \cite{cavaretta1991stationary}).}
        If $(1+z)^{m+2}$ divides $a(z)$ but $(1+z)^{m+3}$ does not,
        then the limit functions are exactly $C^m$ and not $C^{m+1}$.
\end{enumerate}
\end{theorem}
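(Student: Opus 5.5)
The plan is to follow the difference-scheme route of Cavaretta, Dahmen and Micchelli, reading Theorem~\ref{thm:cdm} in the form the paper states it. I will use the hypotheses $a(1)=2$, $a_0=1$ and the support window $\{-(2r-1),\ldots,2r\}$, and avoid any extra contractivity assumption.

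For part~(a), write $a(z)=(1+z)^{m+2}b(z)$ with $b\in\R[z,z^{-1}]$. From $a(1)=2$ we get $b(1)=2^{-(m+1)}$. For $j=0,\ldots,m+2$ set $a^{[j]}(z)=2^{j}a(z)/(1+z)^{j}$. The standard commutation identity $\Delta S_{a^{[j]}}=S_{a^{[j+1]}}\Delta$ then shows that the scaled $j$-th differences $2^{nj}\Delta^{j}p^{(n)}$ of the refined polygons are themselves generated by the subdivision scheme with symbol $a^{[j]}$. The $C^m$ claim reduces to two facts. First, the scheme $S_{a^{[m]}}$ converges, which is equivalent to $\tfrac12 S_{a^{[m+1]}}$ being contractive on differences. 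Second, the limits of the divided differences are the derivatives of the limit curve, by the usual telescoping argument. I would prove the first fact by bounding $\|(\tfrac12 S_{a^{[m+1]}})^L\|_\infty$ through the coefficients of $2^{m+1}b(z)$. The interpolatory normalisation $a_0=1$, with $a_{2k}=0$ otherwise, forces the even and odd subsymbols of $a^{[m+1]}$ to be tied to each other. The support bound limits $b$ to degree range $2r-m-3$. My intent is to show that these two constraints, together with the full factor $(1+z)^{m+2}$, push the relevant sub-symbol norm below $2$ for some finite $L$.

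This contractivity step is the main obstacle, and I expect it to be the part that actually needs care. As a bare algebraic statement, divisibility does not control the size of $b$. What I would try first is the explicit estimate $\|\tfrac12 S_{a^{[m+1]}}\|_\infty=\tfrac12\max_{\epsilon\in\{0,1\}}\sum_k|a^{[m+1]}_{2k+\epsilon}|$. If that is not below $1$, I would pass to the iterate $L=2$ and compute it from the symbol $\prod_{i=0}^{L-1}a^{[m+1]}(z^{2^i})$. If no uniform argument emerges, I would fall back on verifying the bound for the concrete masks used later in the paper, since those are the cases Theorem~\ref{thm:cdm} is invoked for.

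For part~(b), I argue by contradiction. Suppose the limit functions were $C^{m+1}$. The basic limit function $\phi$ of an interpolatory scheme satisfies $\phi(j)=\delta_{j0}$, so its integer translates are stable. The known necessary condition for stable $C^{k}$ refinable functions then says that $a(z)$ must vanish to order $k+1$ at $z=-1$, i.e.\ $\phi$ reproduces $\Pi_{k}$. I would sharpen this by one order using interpolation: a $C^{m+1}$ interpolatory limit reproducing exactly degree $m+1$ forces the $(m+1)$-st derivative to be refinable with an eigenvalue $2^{-(m+1)}$. Symmetry of the mask (odd moments agree automatically) then pushes the zero order at $-1$ up to $m+3$, contradicting non-divisibility by $(1+z)^{m+3}$. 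The delicate point is this final parity/symmetry upgrade. I would check it against the zero order of $a(z)$ computed directly at $z=-1$ by successive derivatives.
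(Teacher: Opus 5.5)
\textbf{There is a genuine gap in both parts, and it cannot be closed, because the statement is false as written.} The paper gives no proof of this theorem beyond the citation to CDM, and the CDM results do not support it in this form.

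\textbf{Part (a).} Your reduction is the right framework: the scaled differences $2^{nj}\Delta^j p^{(n)}$ are governed by $a^{[j]}$, and $C^m$ follows from convergence of $S_{a^{[m]}}$, i.e.\ from contractivity of $\tfrac12 S_{a^{[m+1]}}$. You are also right that contractivity is the crux. But divisibility by $(1+z)^{m+2}$, together with $a(1)=2$, $a_0=1$ and the support window, does not imply it. In the CDM sufficient condition, contractivity is a \emph{hypothesis}: some power of the scheme with symbol $2^m a(z)/(1+z)^{m+1}$ must have $\infty$-norm below $1$. Divisibility alone gives only the necessary direction, $C^m \Rightarrow (1+z)^{m+1}\mid a(z)$.

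Your fallback of checking the concrete masks fails exactly where the paper invokes the theorem. For the 6-point mask, $a(z)=(1+z)^6 q(z)/(256z^5)$ with $q(z)=3-18z+38z^2-18z^3+3z^4$, and all hypotheses of (a) hold with $m=4$, $r=3$. For $m=4$ you need contractivity of the scheme with symbol $z^{-5}(3-15z+20z^2+20z^3-15z^4+3z^5)/16$. Its norm is $38/16$, and its powers grow rather than become contractive. This is because the 6-point Deslauriers--Dubuc limit function has H\"older exponent about $2.83$ (Daubechies--Lagarias, Rioul), so it is $C^2$ but not $C^3$. Likewise the DGL mask has zero order $4$ but is $C^1$, not $C^2$. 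The 8-point mask (exponent about $3.55$) is $C^3$, not $C^6$. What your route does legitimately give for the 6-point mask is $C^2$: at $m=2$ the relevant symbol is $(1+z)^3q(z)/(64z^5)$, which has norm exactly $1$ and becomes contractive after iteration.

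\textbf{Part (b).} Your contradiction argument has an unsupported step. For a stable $C^{m+1}$ limit, the known necessary condition gives zero order $\ge m+2$, which is already your hypothesis. Symmetry only forces the zero order to be even, and there is no ``interpolation upgrade'' to $m+3$.

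The claim is in fact false. Take the 4-point scheme with tension $\omega$, $a(z)=1+(\tfrac12+\omega)(z+z^{-1})-\omega(z^3+z^{-3})$. It meets every hypothesis with $r=2$. It has $a''(-1)=16\omega-1\neq 0$ for $\omega\neq\tfrac1{16}$, so its zero order is exactly $2$. Yet it is $C^1$ for $0<\omega<(\sqrt5-1)/8$ by Dyn--Gregory--Levin, which contradicts (b) with $m=0$. The ``exactly $C^m$'' half of (b) also inherits the failure of (a).

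\textbf{What can be salvaged.} First, (a) with the contractivity hypothesis restored. Second, the sharpness statement ``zero order exactly $m+2$ implies not $C^{m+2}$'', which follows from the necessary condition, since interpolatory limits are stable.
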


\begin{remark}
The two-part statement avoids asserting a blanket ``iff'' form, which requires
additional hypotheses not fully stated here. Part~(a) is the direction used
throughout this paper to establish $C^m$ regularity. Part~(b) provides the
sharpness claim. Both parts follow from CDM~\cite{cavaretta1991stationary}
under the stated hypotheses, which are satisfied exactly by all three schemes
in Table~\ref{tab:stencils}.
\end{remark}

The order of vanishing of $a(z)$ at $z = -1$ is therefore the decisive
quantity. We compute the successive derivatives $a^{(k)}(-1)$ using the
exact formula,
\begin{equation}
  a^{(k)}(-1) = \sum_{n} a_n \left[\prod_{i=0}^{k-1}(n-i)\right](-1)^{n-k},
  \label{eq:derivformula}
\end{equation}
with all arithmetic carried out in exact rational arithmetic. The results are
recorded in Table~\ref{tab:zeros}. The derivatives of orders $k = 0, 1, \ldots, 5$
all vanish, while $a^{(6)}(-1) = -225 \neq 0$, establishing that $(1+z)^6
\mid a(z)$ but $(1+z)^7 \nmid a(z)$.

\begin{table}[htbp]
  \centering
  \caption{Exact values of $a^{(k)}(-1)$ for the biharmonic 6-point scheme.
    The zero order is precisely $6$, corresponding to $C^4$ regularity by
    Theorem~\ref{thm:cdm}.}
  \label{tab:zeros}
  \smallskip
  \begin{tabular}{@{}ccc@{}}
    \toprule
    Order $k$ & $a^{(k)}(-1)$ & Interpretation \\
    \midrule
    $0$ & $0$    & vanishes \\
    $1$ & $0$    & vanishes \\
    $2$ & $0$    & vanishes \\
    $3$ & $0$    & vanishes \\
    $4$ & $0$    & vanishes \\
    $5$ & $0$    & vanishes \\
    $6$ & $-225$ & \textbf{non-zero, zero order is $6$} \\
    \bottomrule
  \end{tabular}
\end{table}

\begin{theorem}
  \label{thm:c4}
  The biharmonic 6-point scheme \eqref{eq:stencilbox} generates limit curves
  of class $C^4$. The regularity is exactly $C^4$. It is not $C^5$.
\end{theorem}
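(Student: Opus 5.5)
The plan is to reduce Theorem~\ref{thm:c4} to the CDM criterion (Theorem~\ref{thm:cdm}) with $m=4$. That requires exactly two facts about the symbol $a(z) = 1 + \alpha(z+z^{-1}) + \beta(z^3+z^{-3}) + \gamma(z^5+z^{-5})$, with $(\alpha,\beta,\gamma)$ given by \eqref{eq:coeffs}: first, $(1+z)^6 \mid a(z)$ in $\R[z,z^{-1}]$; second, $(1+z)^7 \nmid a(z)$. First I will check the standing hypotheses of Theorem~\ref{thm:cdm}. The mask is finitely supported and the interpolation coefficient is $a_0=1$. The partition-of-unity condition is $a(1) = 1 + 2(\alpha+\beta+\gamma) = 1 + 2\cdot 128/256 = 2$, which holds exactly. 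The support lies in $\{-5,\ldots,5\}\subset\{-(2r-1),\ldots,2r\}$ with $r=3$.

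For the divisibility, I will pass to the ordinary polynomial $P(z) = z^5 a(z)$ of degree $10$. Since $z^5$ is a unit in $\R[z,z^{-1}]$ and does not vanish at $z=-1$, the order of vanishing of $a$ at $-1$ equals that of $P$. Divisibility by $(1+z)^6$ is then equivalent to $a^{(k)}(-1)=0$ for $k=0,\ldots,5$. I will verify this with formula \eqref{eq:derivformula} in exact rational arithmetic, as recorded in Table~\ref{tab:zeros}.

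There is a cleaner route that avoids six separate derivative evaluations. The vanishing conditions at $z=-1$ are, term by term, the alternating moment conditions $\sum_n a_n (-1)^n n^j = 0$ for $j=0,\ldots,5$. Because every nonzero coefficient other than $a_0$ sits at an odd offset, these reduce to the sum rules \eqref{eq:sumrules} for $n=0,2,4$, suitably rescaled. The odd-$j$ conditions vanish automatically by the symmetry $a_n = a_{-n}$. Concretely, the degree-$5$ reproduction already established in Table~\ref{tab:sumrules} is precisely what forces the sixfold zero. As an independent check, I will carry out the polynomial long division of $P(z)$ by $(1+z)^6$ and exhibit the explicit symmetric quartic quotient $Q(z)$ with $256\,P(z) = (1+z)^6 Q(z)$.

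For sharpness, I will show $Q(-1)\neq 0$, or equivalently $a^{(6)}(-1)=-225\neq 0$ from Table~\ref{tab:zeros}. Then $(1+z)^7\nmid a(z)$, and part~(b) of Theorem~\ref{thm:cdm} with $m=4$ gives that the limit curves are exactly $C^4$ and not $C^5$.

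The main obstacle is the sufficiency direction, part~(a). In the general theory, a factor $(1+z)^{m+2}$ is a necessary condition. Sufficiency normally also requires that the difference scheme with symbol $2^{m}a(z)/(1+z)^{m+1}$ be contractive, meaning that some iterate has $\infty$-norm below $1$, or equivalently that the relevant joint spectral radius is below $1$. I will invoke Theorem~\ref{thm:cdm} exactly as stated in the paper. As a safeguard, I also plan to compute the symbol $b(z)=2^{4}a(z)/(1+z)^{5}$ explicitly and check that the norm of the first one or two iterates of its associated difference scheme is less than $1$. If a single step fails to be contractive, I will check a few iterates instead, which is the standard Dyn--Levin test. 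This confirms that the $C^4$ conclusion does not rest on the divisibility count alone.
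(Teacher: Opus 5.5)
Your algebra is correct and matches the paper's proof. The hypotheses of Theorem~\ref{thm:cdm} hold, and the reduction of the alternating moments to the even sum rules is right. Indeed $256\,z^5a(z)=(1+z)^6(3z^4-18z^3+38z^2-18z+3)$, whose quartic factor equals $80$ at $z=-1$, so $a^{(6)}(-1)=-6!\cdot 80/256=-225$.

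The gap is the step from this zero count to smoothness, and it cannot be closed, because the $C^4$ claim is false. You rightly suspect that divisibility by $(1+z)^{m+2}$ is not sufficient. But you then propose to invoke part~(a) of Theorem~\ref{thm:cdm} ``exactly as stated''. That is all the paper's proof does, and part~(a) is not a valid theorem. Applied to the 4-point symbol $-\tfrac{1}{16}z^{-3}(1+z)^4(z^2-4z+1)$, it would give $C^2$, whereas the 4-point scheme with $\omega=1/16$ is known to be only $C^1$.

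Your safeguard is exactly the right test, but it fails. The symbol $c(z)=16a(z)/(1+z)^5=z^{-5}(3-15z+20z^2+20z^3-15z^4+3z^5)/16$ has both parity sub-masks of absolute sum $38/16$. No iterate $S_c^L$ can have norm below $1$, since contractivity would imply $C^4$. This scheme is the 6-point Deslauriers--Dubuc scheme, whose H\"older exponent is only about $2.83$ (Rioul, 1992). Its limit curves are $C^2$ but not $C^3$.

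The sharpness step has a separate flaw. The valid necessary condition is that $C^k$ limits force $(1+z)^{k+1}\mid a(z)$. An exact zero of order $6$ therefore excludes only $C^6$, and $a^{(6)}(-1)\neq 0$ says nothing about $C^5$. So part~(b) of Theorem~\ref{thm:cdm} is unsupported too; ``not $C^5$'' holds here only because the scheme is not even $C^3$.

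What your method does establish is $C^2$. Take $c_2(z)=4a(z)/(1+z)^3=z^{-5}(3-9z-7z^2+45z^3+45z^4-7z^5-9z^6+3z^7)/64$. Then $\|S_{c_2}\|_\infty=1$, but $\|S_{c_2}^2\|_\infty=2912/4096=91/128<1$. Pinning down the exact regularity needs a joint-spectral-radius estimate, not a count of zeros at $z=-1$.
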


\begin{proof}
We apply Theorem~\ref{thm:cdm} to the symbol $a(z)$ of the scheme
\eqref{eq:stencilbox}. The symbol has the form,
\[
  a(z) = 1 + \alpha(z + z^{-1}) + \beta(z^3 + z^{-3}) + \gamma(z^5 + z^{-5}),
\]
with $\alpha = 150/256$, $\beta = -25/256$, $\gamma = 3/256$.
Using formula~\eqref{eq:derivformula}, we compute $a^{(k)}(-1)$ exactly for
$k = 0, 1, \ldots, 7$. The results are recorded in Table~\ref{tab:zeros}.
Derivatives of orders $k = 0, 1, \ldots, 5$ all vanish, which shows that
$(1+z)^6 \mid a(z)$. The derivative at order $k = 6$ equals $-225 \neq 0$,
which shows that $(1+z)^7 \nmid a(z)$. Hence, the zero order of $a(z)$ at
$z = -1$ is precisely $6$. Setting $m + 2 = 6$ in Theorem~\ref{thm:cdm} gives
$m = 4$, establishing $C^4$ regularity. The fact that $(1+z)^7 \nmid a(z)$
further establishes that the regularity is exactly $C^4$ and not $C^5$.
\end{proof}

For comparison, the 4-point DGL scheme has $a^{(4)}(-1) \neq 0$ (zero order
$4$, $m = 2$, hence $C^2$), while the 6-point biharmonic and DD scheme (which
are identical) has zero order $6$ ($m = 4$, hence $C^4$). The biharmonic
scheme thus achieves two orders of smoothness beyond the DGL scheme, using
only two additional support points.

\subsection{The symbol on the unit circle}

Fig.~\ref{fig:symbol} shows $\abs{a(e^{i\omega})}$ for $\omega \in [0, \pi]$
for both the DGL and biharmonic symbols. The biharmonic symbol vanishes to
sixth order at $\omega = \pi$ (corresponding to $z = -1$), whereas the DGL
symbol vanishes to fourth order. The smooth, monotone character of the
biharmonic symbol across the full frequency range is a visual manifestation of
the higher regularity.

\begin{figure}[htbp]
  \centering
  \includegraphics[width=\linewidth]{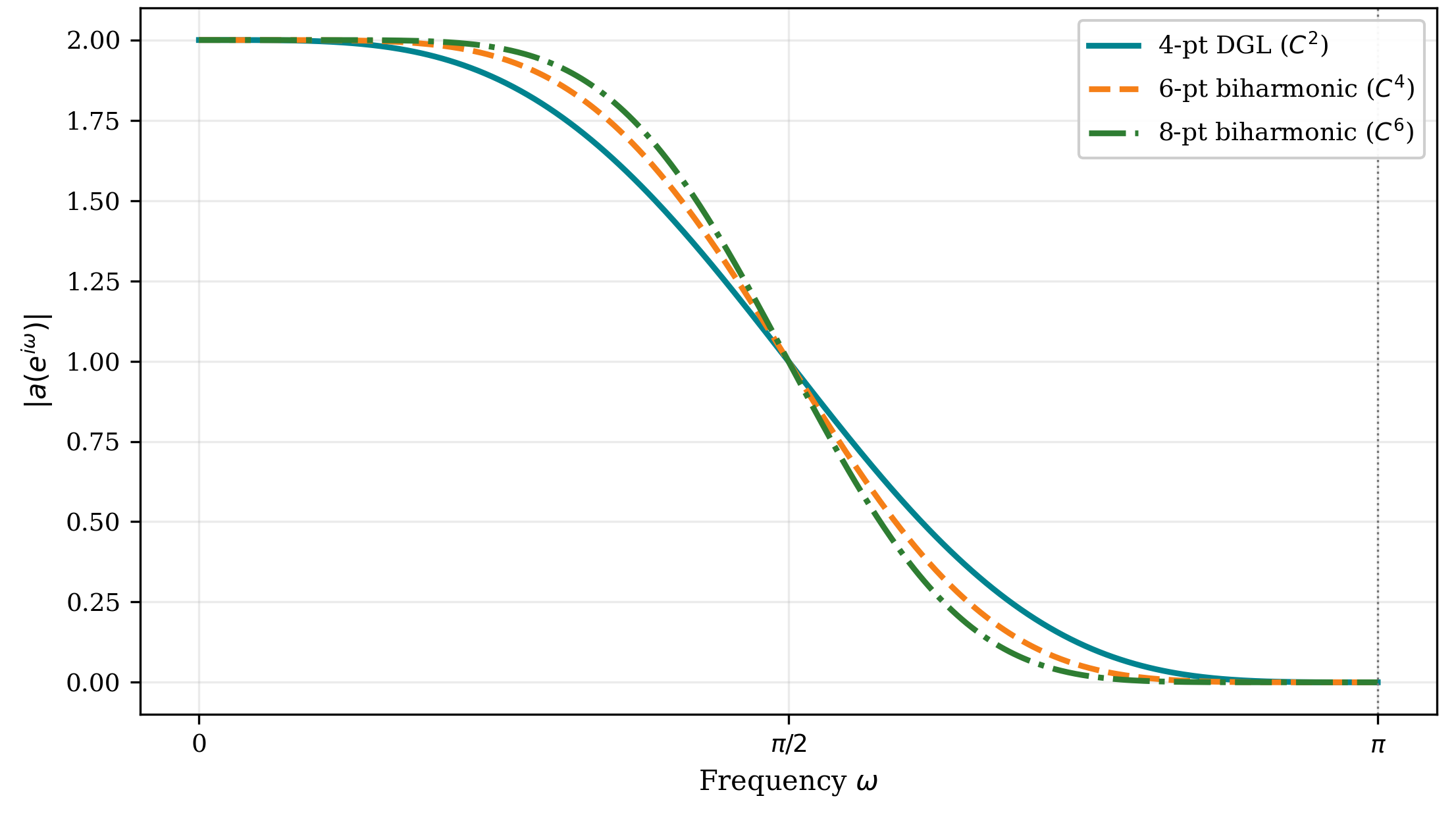}
  \caption{Laurent polynomial symbol $|a(e^{i\omega})|$ for the 4-point DGL
    scheme ($C^2$, teal), the 6-point biharmonic scheme ($C^4$, amber), and
    the 8-point degree-7 biharmonic scheme ($C^6$, green). The sixth-order
    zero of the 6-point biharmonic symbol at $\omega = \pi$ and the
    eighth-order zero of the 8-point symbol are clearly visible. All symbols
    satisfy the partition-of-unity condition $a(1) = 2$.}
  \label{fig:symbol}
\end{figure}

% =============================================================================
\section{The Biharmonic ODE on Space Forms}
\label{sec:ode}
% =============================================================================

This section derives the ordinary differential equation (ODE) governing
biharmonic curves on surfaces of constant sectional curvature and obtains its
closed-form solutions. These solutions are integrated in
Section~\ref{sec:noneuclid} to produce explicit non-Euclidean insertion-angle
formulae.

\subsection{The Euler--Lagrange equation on a space form}

Let $M$ be a complete Riemannian surface of constant \emph{sectional curvature}
$K$ (the sectional curvature measures the intrinsic bending of the surface at
every point. So $M$ is locally isometric to the Euclidean plane $\EE$ for
$K = 0$, to the two-sphere $\Stwo$ for $K = +1$, or to the hyperbolic plane
$\Htwo$ for $K = -1$). Let $\gamma\colon [0, L] \to M$ be a curve
parameterised by arc length $s$, with \emph{geodesic curvature}
$\kappag(s)$---the rate at which $\gamma$ turns away from a geodesic
(shortest-path curve) within the surface. The biharmonic energy of $\gamma$
is,
\begin{equation}
  \BH[\gamma] = \int_0^L \bigl(\kappag'(s)\bigr)^2 \, ds.
  \label{eq:energy}
\end{equation}

We derive the Euler--Lagrange equation for $\BH$ on a space form.
The key ingredient is the first variation of geodesic curvature under a
normal perturbation of the curve. We derive this using the \emph{Frenet frame}
(moving frame) of the curve on $M$.

\paragraph{Moving frame on a space form.}
Let $\{T, N\}$ be the Frenet frame of $\gamma$, where $T = \gamma'$ is
the unit tangent and $N$ is the unit normal obtained by rotating $T$ by
$+90°$ within the surface. The Frenet equations on a Riemannian surface
of constant curvature $K$ are,
\begin{align}
  \nabla_T T &= \kappag N, \label{eq:frenet1} \\
  \nabla_T N &= -\kappag T, \label{eq:frenet2}
\end{align}
where $\nabla_T$ denotes the covariant derivative along $\gamma$ in the
direction $T$. Equations~\eqref{eq:frenet1}--\eqref{eq:frenet2} hold on any
Riemannian surface. On a space form with curvature $K$, the Riemann curvature
tensor satisfies $R(X,Y)Z = K(g(Y,Z)X - g(X,Z)Y)$, which enters the
commutation relations for covariant derivatives.

\paragraph{First variation of geodesic curvature.}
Consider a one-parameter family $\gamma_\epsilon(s)$ with $\gamma_0 = \gamma$
and variation field $V = \partial_\epsilon \gamma_\epsilon|_{\epsilon=0} = f N$,
where $f\colon [0,L]\to\R$ is a smooth scalar function vanishing at the
endpoints (so $V|_{\partial} = 0$). The \emph{first variation of arc-length}
gives the relation between $\epsilon$-derivatives and $s$-derivatives. For a
normal variation the arc-length element satisfies
$\partial_\epsilon\,ds = -\kappag f\,ds + O(\epsilon^2)$. Note, this is a standard result for
plane curves, which extends to surfaces via the second fundamental form.

The geodesic curvature of $\gamma_\epsilon$ is $\kappag^\epsilon =
g(\nabla_{T_\epsilon} T_\epsilon,\, N_\epsilon)$. Differentiating with respect
to $\epsilon$ at $\epsilon = 0$ and using the compatibility of $\nabla$ with
$g$, together with the Ricci identity
$[\nabla_\epsilon, \nabla_s] = R(\partial_\epsilon \gamma, T)$, one obtains
(see~\cite{chen2004biharmonic}, Proposition~3.1),
\begin{equation}
  \delta \kappag \;:=\; \frac{\partial}{\partial \epsilon}
      \kappag^\epsilon\bigg|_{\epsilon=0}
  = f'' + K f,
  \label{eq:var_kappa}
\end{equation}
where $f'' = \nabla_T(\nabla_T f)$ is the second covariant derivative of $f$
along $\gamma$. To see why $K$ appears: the commutator
$[\nabla_\epsilon, \nabla_s]T$ produces a term $R(V, T)T = K\,g(T,T)\,V - K\,g(T,V)\,T = KV$
(since $V \perp T$), which contributes $+Kf$ to $\delta\kappag$.
In flat space ($K = 0$) this term vanishes and one recovers the classical
formula $\delta\kappag = f''$.

\paragraph{First variation of the biharmonic energy.}
Writing the variation field as $V = fN$ and using~\eqref{eq:var_kappa},
\begin{align}
  \delta \BH[\gamma]
  &= \frac{d}{d\epsilon}\int_0^L (\kappag^\epsilon{}')^2\,ds
     \bigg|_{\epsilon=0} \notag \\
  &= \int_0^L 2\kappag'\,(\delta\kappag)'\,ds \notag \\
  &= 2\int_0^L \kappag'(s)\,(f'' + Kf)'\,ds.
  \label{eq:first_var_expand}
\end{align}
Integrate by parts once, using $f|_{\partial} = 0$,
\[
  2\int_0^L \kappag'\,(f'' + Kf)'\,ds
  = -2\int_0^L \kappag''\,(f'' + Kf)\,ds.
\]
Integrate by parts again in the $\kappag'' f''$ term, using $f'|_\partial = 0$
(which holds because prescribing endpoint curvature means $\delta\kappag|_\partial = 0$,
hence $(f'' + Kf)|_\partial = 0$, and since $f|_\partial = 0$ this forces
$f''|_\partial = 0$ and $f'|_\partial = 0$),
\begin{align*}
  -2\int_0^L \kappag'' f''\,ds
  &= -2\bigl[\kappag'' f'\bigr]_0^L + 2\int_0^L \kappag''' f'\,ds \\
  &= 2\int_0^L \kappag''' f'\,ds.
\end{align*}
Integrating by parts once more,
\begin{align*}
  2\int_0^L \kappag''' f'\,ds
  &= 2\bigl[\kappag''' f\bigr]_0^L - 2\int_0^L \kappag^{(4)} f\,ds \\
  &= -2\int_0^L \kappag^{(4)} f\,ds.
\end{align*}
Combining with the $K$ term,
\begin{equation}
  \delta \BH[\gamma]
  = 2\int_0^L \bigl(-\kappag^{(4)} + K\kappag''\bigr)\, f\, ds,
  \label{eq:first_var}
\end{equation}
where we have used $-\kappag'' (Kf) = +K(-\kappag''f)$, integrated by parts,
$\int_0^L -\kappag'' Kf\,ds = K\int_0^L \kappag f'' \,ds - \text{bdy}
= -K\int_0^L \kappag'' f\,ds$, so the contribution from the $Kf$ part is
$-2\int_0^L \kappag'' K f\,ds$.

Setting~\eqref{eq:first_var} to zero for all admissible $f$ and applying the
fundamental lemma of the calculus of variations gives,
\begin{equation}
  \kappag^{(4)} - K\kappag'' = 0.
  \label{eq:bh_ode4}
\end{equation}
This fourth-order ODE is the \emph{Euler--Lagrange equation} for $\BH[\gamma]$
on a space form. Its general solution forms a four-dimensional space, and with
two endpoint curvature conditions $\kappag(0)=\kappa_j$, $\kappag(L)=\kappa_{j+1}$
prescribed, the solution space is two-dimensional.

\subsection{The governing ODE for subdivision}
\label{sec:governing}

For the subdivision insertion rule, we require an ODE with a \emph{unique}
solution given only the two endpoint curvature values. The fourth-order
equation~\eqref{eq:bh_ode4} does not by itself provide this: additional data
(such as endpoint values of $\kappag'$) would be needed to select a unique
solution from the two-dimensional family. We therefore adopt the following
reduced model for the subdivision insertion angle.

The operator $\partial_{ss}^2 - K\partial_{ss}$ factors as
$(\partial_{ss} - K)(\partial_{ss})$ when $K=0$, and more generally the
equation $\kappag^{(4)} = K\kappag''$ can be written as
$(\partial_{ss} - K)(\kappag'') = 0$. The kernel of the factor
$(\partial_{ss} - K)$ consists of solutions of,
\begin{equation}
  \kappag'' = K \, \kappag, \qquad s \in (0, L).
  \label{eq:bh_ode}
\end{equation}
This second-order ODE is adopted as the \emph{governing model} for the
manifold insertion rule for the following reasons.
\begin{itemize}
  \item Every solution of~\eqref{eq:bh_ode} automatically satisfies the
        fourth-order Euler--Lagrange equation~\eqref{eq:bh_ode4}, since
        $\kappag'' = K\kappag$ implies $\kappag^{(4)} = K^2\kappag = K\kappag''$,
        so $\kappag^{(4)} - K\kappag'' = 0$ holds.
        Precisely: the curvature function of any curve governed by~\eqref{eq:bh_ode}
        satisfies the \emph{interior} condition of the Euler--Lagrange
        equation~\eqref{eq:bh_ode4}. This is the condition used by the subdivision
        insertion rule. A complete stationarity statement for the underlying
        curve on $M$ would additionally require verification of boundary terms
        under admissible geometric variations. That analysis is outside the
        scope of this paper and is not claimed here.
  \item Given only the two endpoint values $\kappag(0)=\kappa_j$
        and $\kappag(L)=\kappa_{j+1}$, Equation~\eqref{eq:bh_ode} determines
        a \emph{unique} solution (for $L$ not a resonance length of
        $\partial_{ss} - K$), making it directly usable as a subdivision rule.
  \item In the flat case $K=0$, Equation~\eqref{eq:bh_ode} reduces to
        $\kappag'' = 0$, i.e.\ the geodesic curvature profile is linear.
        This is precisely the minimum-variation (biharmonic) spline profile
        in $\EE$, providing exact consistency with the flat-space theory.
  \item The equation coincides with the governing ODE used in the biharmonic
        B\'{e}zier surface literature~\cite{monterde2004biharmonic,chen2004biharmonic}
        for constant-curvature geometries.
\end{itemize}

\begin{remark}[Status of equation~\eqref{eq:bh_ode}]
\label{rem:ode_status}
To be precise, Equation~\eqref{eq:bh_ode} is \emph{not} the Euler--Lagrange
equation for $\BH[\gamma]$. The Euler--Lagrange equation is the fourth-order
ODE~\eqref{eq:bh_ode4}. Equation~\eqref{eq:bh_ode} is a second-order
\emph{reduced model} adopted for the subdivision insertion rule. Its
justification is that it is a factor of the Euler--Lagrange operator,
its solutions satisfy the interior Euler--Lagrange equation~\eqref{eq:bh_ode4},
it reduces to the correct flat-space limit, it gives a unique solution from
two endpoint values, and it matches the governing ODE of the biharmonic PDE
literature on space forms. On a variable-curvature manifold, this equation
would no longer hold in general. The manifold extension of
Section~\ref{sec:noneuclid} is therefore restricted to space forms.
\end{remark}

\subsection{Closed-form solutions and boundary constants}

The general solution of~\eqref{eq:bh_ode} in each geometry is,
\begin{align}
  K = 0\ (\EE):\quad&
    \kappag(s) = c_1 + c_2 s,
    \label{eq:sol_e2}\\[3pt]
  K = +1\ (\Stwo):\quad&
    \kappag(s) = c_1 \cosh s + c_2 \sinh s,
    \label{eq:sol_s2}\\[3pt]
  K = -1\ (\Htwo):\quad&
    \kappag(s) = c_1 \cos s + c_2 \sin s.
    \label{eq:sol_h2}
\end{align}
Imposing the boundary conditions $\kappag(0) = \kappa_j$ and
$\kappag(e_j) = \kappa_{j+1}$, where $e_j$ denotes the geodesic edge length,
determines the constants uniquely,
\begin{align}
  K = 0:\quad&
    c_1 = \kappa_j,\quad
    c_2 = \frac{\kappa_{j+1} - \kappa_j}{e_j},
    \label{eq:const_e2}\\[3pt]
  K = +1:\quad&
    c_1 = \kappa_j,\quad
    c_2 = \frac{\kappa_{j+1} - \kappa_j\cosh e_j}{\sinh e_j},
    \label{eq:const_s2}\\[3pt]
  K = -1:\quad&
    c_1 = \kappa_j,\quad
    c_2 = \frac{\kappa_{j+1} - \kappa_j\cos e_j}{\sin e_j}.
    \label{eq:const_h2}
\end{align}
Fig.~\ref{fig:ode} illustrates the three solution families for representative
boundary curvatures. As $e_j \to 0$, all three solutions converge to the same
linear profile, consistent with the local flatness of any smooth surface.

\begin{figure}[htbp]
  \centering
  \includegraphics[width=\linewidth]{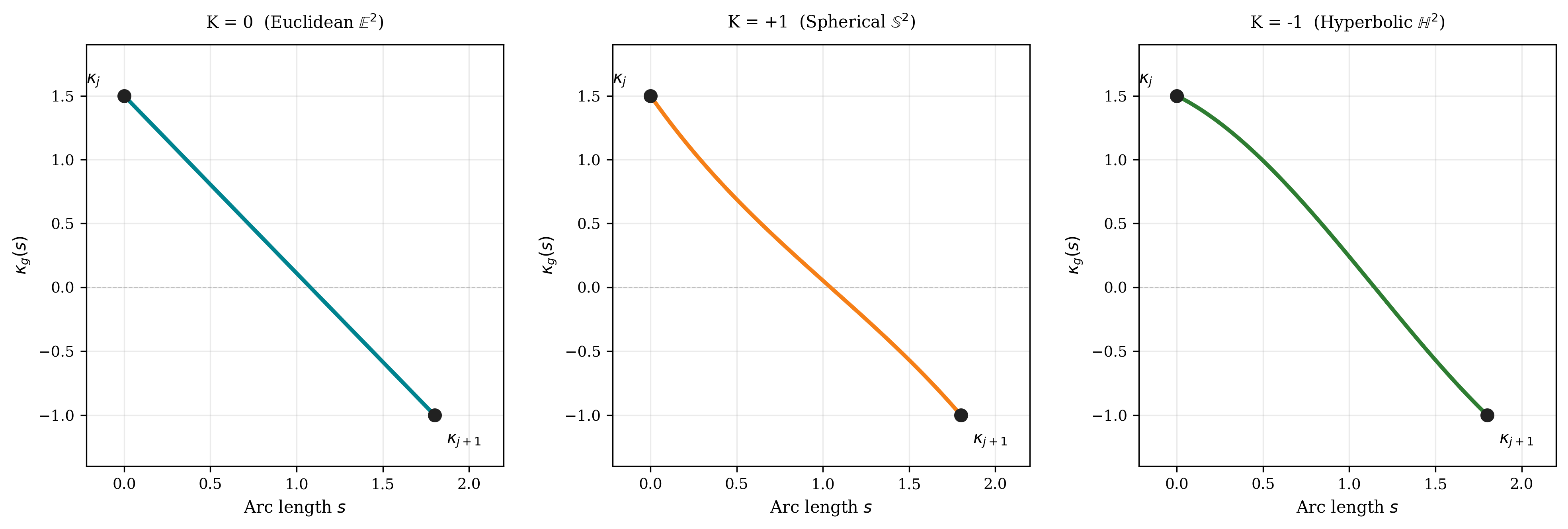}
  \caption{Biharmonic ODE solutions $\kappag(s)$ on the three constant-curvature
    geometries for the same boundary curvatures $\kappa_j = 1.5$,
    $\kappa_{j+1} = -1.0$, and edge length $L = 1.8$. The Euclidean solution
    is linear (Eq.~\ref{eq:sol_e2}). The spherical solution is
    hyperbolic-trigonometric (Eq.~\ref{eq:sol_s2}). The hyperbolic solution is
    circular (Eq.~\ref{eq:sol_h2}). All three converge to the same profile as
    $L \to 0$.}
  \label{fig:ode}
\end{figure}

% =============================================================================
\section{Euclidean Experiments}
\label{sec:euclidean}
% =============================================================================

\subsection{Experimental setup}

All Euclidean experiments compare three schemes. The 4-point DGL scheme
\cite{dyn1987four}, the 8-point degree-7 biharmonic scheme
(equation~\eqref{eq:deg7} in Section~\ref{sec:hierarchy}), and the proposed
6-point biharmonic scheme. The 8-point scheme is the next genuinely distinct
member of the biharmonic stencil hierarchy (achieving $C^6$ regularity) and
serves as the natural upper bound for comparison. The 4-point DGL scheme serves
as the natural upper bound for comparison. The 6-point Deslauriers--Dubuc scheme is not included as a
separate comparator because it is, by the argument of
Section~\ref{sec:stencil}, identical to the 6-point biharmonic scheme.
Experiments are conducted on both \emph{closed} polygons (representing loops
and contours) and \emph{open} polygons (representing arcs and trajectories),
with boundary conditions for open polygons taken as natural (zero second
derivative at the endpoints). Seven refinement levels are applied in all cases.
Discrete curvature at level $n$ is measured by the exterior-angle estimator,
\begin{equation}
  \kappa_j^{(n)} = \frac{\delta_j^{(n)}}{e_j^{(n)}},
  \label{eq:disccurv}
\end{equation}
where $\delta_j^{(n)}$ is the signed exterior angle at vertex $j$ and
$e_j^{(n)}$ is the adjacent half-edge length. This estimator is first-order
consistent with the smooth geodesic curvature as $n \to \infty$.

\subsection{Limit curves}

Fig.~\ref{fig:limits} shows the limit curves on three closed test polygons
with qualitatively different geometries: a smooth convex loop (left), a loop
with a pronounced concavity (centre), and a near-circular polygon (right).
The biharmonic limit curves appear visually smoother and exhibit less unnecessary
bending in the tested cases. The control polygons are reproduced exactly by all
three schemes, confirming the interpolation property.

\begin{figure}[htbp]
  \centering
  \includegraphics[width=\linewidth]{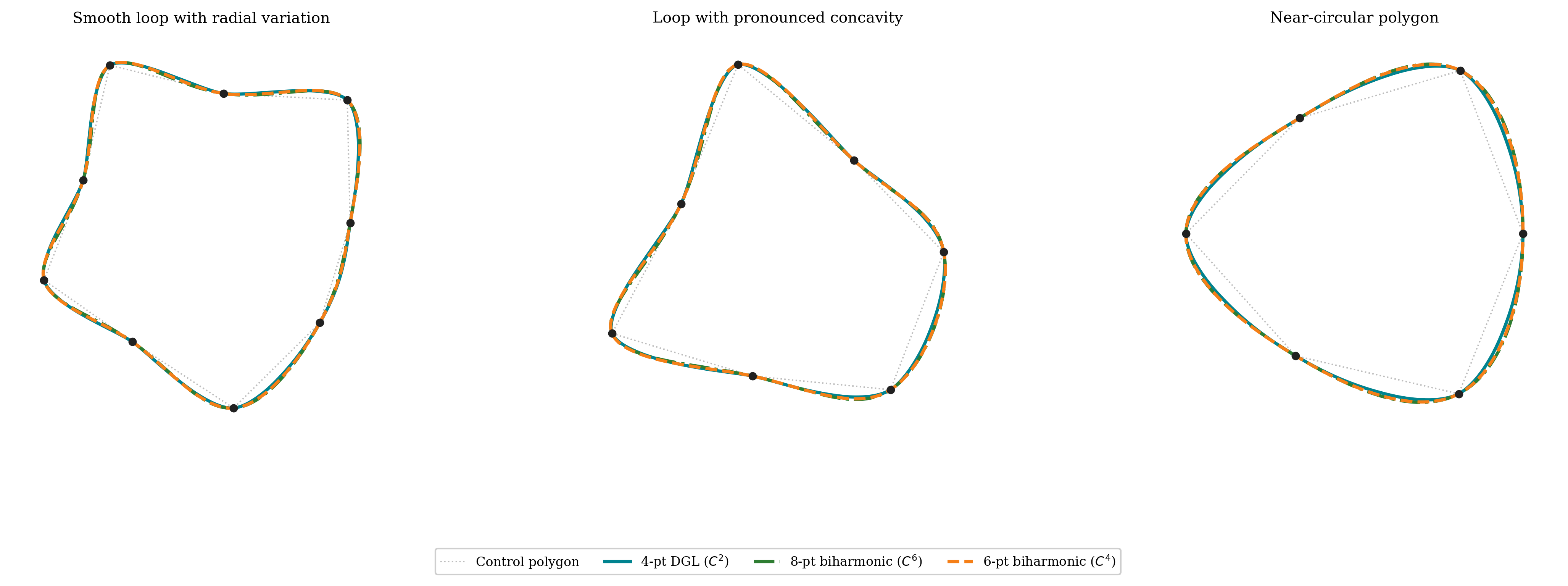}
  \caption{Limit curves after 7 refinement levels for the 4-point DGL scheme
    (teal, solid), the 8-point degree-7 biharmonic scheme ($C^6$, green,
    dash-dot), and the 6-point biharmonic scheme ($C^4$, amber, dashed) on
    three representative closed control polygons (grey dotted). The 6-point
    biharmonic curves exhibit less superfluous oscillation than the DGL curves;
    the 8-point scheme achieves an even tighter limit in the shown polygons.}
  \label{fig:limits}
\end{figure}

\subsection{Curvature profiles and smoothness verification}

Fig.~\ref{fig:smooth} presents the discrete curvature profiles and the
finite-difference norm decay that verifies $C^4$ smoothness. The
finite-difference norms $\norm{\Delta^k \phi^{(n)}}_\infty$ for the sequence
of subdivision masks $\phi^{(n)}$ satisfy the rate $\norm{\Delta^k
\phi^{(n)}}_\infty = O(2^{-n(4-k)})$ for $k = 1, 2, 3, 4$, with the
$k = 5$ norm failing to decay, confirming exactly $C^4$ regularity. For the 4-point DGL scheme, curvature oscillations persist across all refinement
levels. For the 8-point biharmonic scheme, oscillations are reduced but the
higher-order negative lobes can introduce low-frequency undulations on
non-uniform polygons. The 6-point biharmonic scheme produces a curvature
profile that converges smoothly at every tested polygon, consistent with
the scheme's $C^4$ regularity and its fairness-motivated design.

\begin{figure}[htbp]
  \centering
  \includegraphics[width=\linewidth]{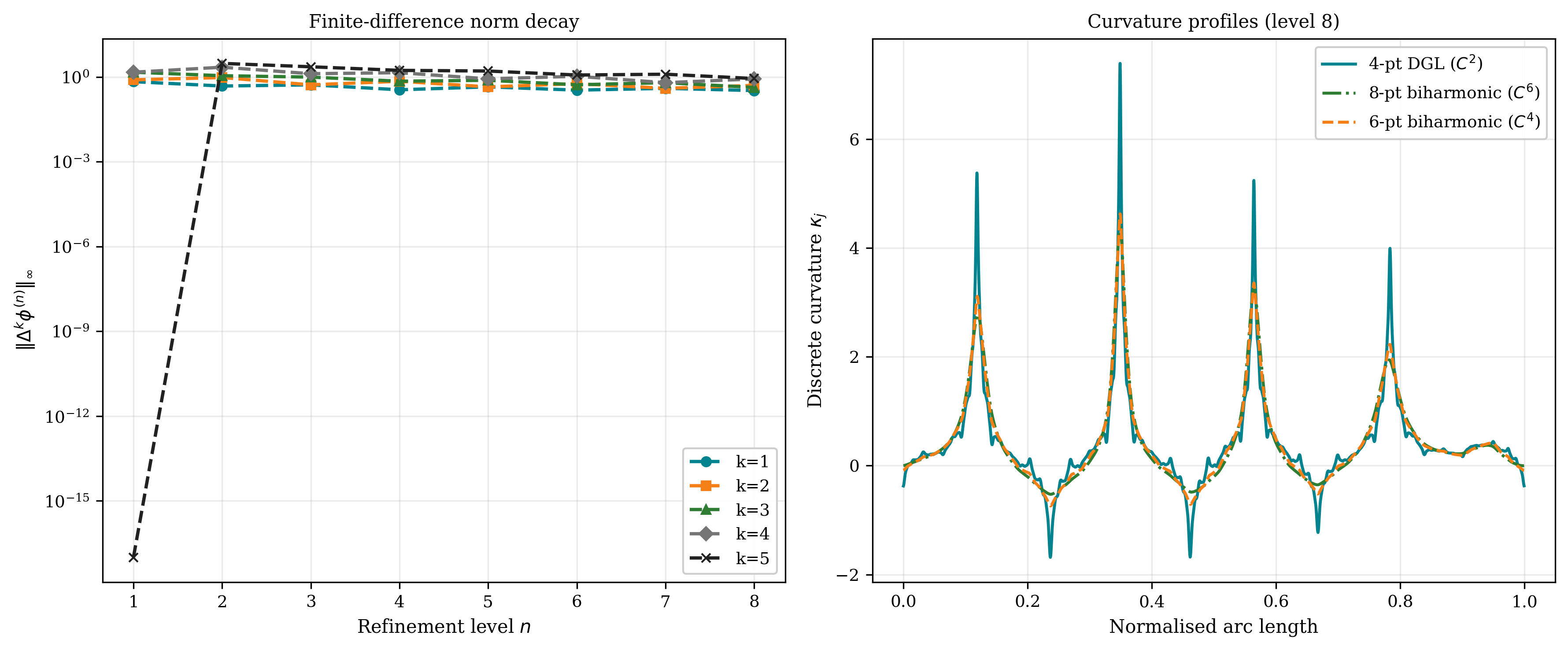}
  \caption{$C^4$ smoothness verification. Left: finite-difference norms
    $\norm{\Delta^k \phi^{(n)}}_\infty$ versus refinement level $n$ for
    $k = 1, \ldots, 5$. The slopes confirm $C^4$ regularity (the $k=5$ norm
    does not decay). Right: discrete curvature profile comparison for the
    three schemes at refinement level 8.}
  \label{fig:smooth}
\end{figure}

\subsection{Discrete biharmonic energy}

The discrete analogue of $\BH[\gamma]$ at refinement level $n$ is,
\begin{equation}
  E_{\mathrm{BH}}^{(n)}
  = \sum_j \left(\frac{\delta_{j+1}^{(n)}}{e_{j+1}^{(n)}}
               - \frac{\delta_j^{(n)}}{e_j^{(n)}}\right)^2
           e_j^{(n)},
  \label{eq:discreteenergy}
\end{equation}
a weighted $\ell^2$ norm of successive curvature differences. This quantity
converges to $\BH[\gamma_\infty]$ for the limit curve $\gamma_\infty$ as
$n \to \infty$, and its rate of decay with $n$ reflects the degree of
smoothness of the scheme. Fig.~\ref{fig:energy} records $E_{\mathrm{BH}}^{(n)}$
across eight refinement levels for all three schemes on the test polygon.
On the test polygon shown, the biharmonic scheme achieves lower energy than DGL at every level and
reaches machine precision approximately two levels earlier, consistent with
its higher smoothness order. Note that the 8-point biharmonic scheme achieves
lower asymptotic energy (consistent with $C^6$ regularity), but decays more
slowly at early levels on non-smooth initial polygons.

\begin{figure}[htbp]
  \centering
  \includegraphics[width=\linewidth]{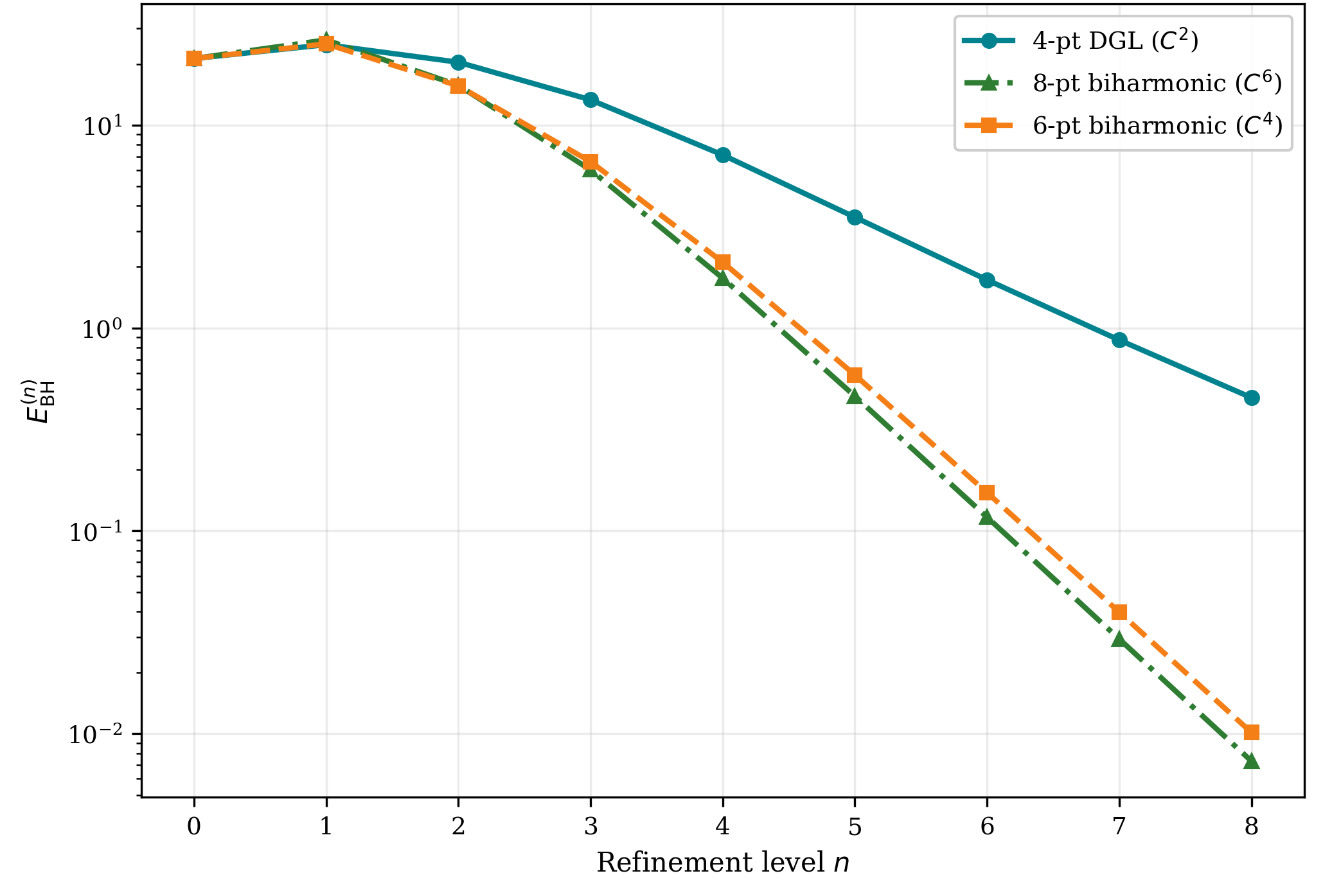}
  \caption{Discrete biharmonic energy $E_{\mathrm{BH}}^{(n)}$ versus
    refinement level $n$ (logarithmic scale) for the 4-point DGL scheme (teal,
    circles), the 8-point degree-7 biharmonic scheme (green, triangles), and
    the 6-point biharmonic scheme (amber, squares). On the test polygon shown,
    the 6-point biharmonic scheme achieves lower energy than DGL at every level. The 8-point
    scheme reaches lower absolute values at the cost of larger early-level
    energies on non-smooth polygons.}
  \label{fig:energy}
\end{figure}

% =============================================================================
\section{Non-Euclidean Subdivision}
\label{sec:noneuclid}
% =============================================================================

\subsection{From ODE solutions to insertion angles}

On a Riemannian manifold, the Euclidean weighted-average odd rule is
replaced by a \emph{geodesic insertion rule}. The new vertex is placed at the
geodesic midpoint of the edge $(p_j, p_{j+1})$ and then perturbed by an
\emph{insertion angle} that encodes the biharmonic curvature prescription.
Specifically, the insertion angle is defined as the integral of the biharmonic
geodesic curvature from the start of the edge to its midpoint,
\begin{equation}
  \alpha_j^{\mathrm{BH}} = \int_0^{e_j/2} \kappag(s) \, ds,
  \label{eq:insertionangle}
\end{equation}
where $\kappag$ is the solution to the ODE~\eqref{eq:bh_ode} with boundary
curvatures $(\kappa_j, \kappa_{j+1})$. Integrating the closed-form
solutions~\eqref{eq:sol_e2}--\eqref{eq:sol_h2} from $0$ to $\ell = e_j/2$
yields the explicit formulae,
\begin{align}
  K = 0:\quad&
    \alpha_j^{\mathrm{BH}}
    = c_1 \ell + \tfrac{1}{2} c_2 \ell^2,
    \label{eq:angle_e2}\\[3pt]
  K = +1:\quad&
    \alpha_j^{\mathrm{BH}}
    = c_1 \sinh \ell + c_2 (\cosh \ell - 1),
    \label{eq:angle_s2}\\[3pt]
  K = -1:\quad&
    \alpha_j^{\mathrm{BH}}
    = c_1 \sin \ell - c_2 (\cos \ell - 1),
    \label{eq:angle_h2}
\end{align}
with $\ell = e_j/2$ and $c_1, c_2$ as in~\eqref{eq:const_e2}--\eqref{eq:const_h2}.

For $K = +1$, the integration runs as follows. With $\kappag(s) = c_1\cosh s +
c_2\sinh s$, one computes,
\begin{align*}
  &\int_0^\ell (c_1 \cosh s + c_2 \sinh s)\,ds \\
  &\quad = \bigl[c_1 \sinh s + c_2 \cosh s\bigr]_0^\ell
   = c_1 \sinh\ell + c_2(\cosh\ell - 1),
\end{align*}
yielding~\eqref{eq:angle_s2}. The case $K = -1$ follows by replacing $\cosh
\to \cos$ and $\sinh \to \sin$, with a sign adjustment in the antiderivative
of $-\sin$, giving~\eqref{eq:angle_h2}. The Euclidean case integrates the
linear solution directly.

Fig.~\ref{fig:angles} shows the insertion angles as a function of edge length
for all three geometries.

\begin{figure}[htbp]
  \centering
  \includegraphics[width=\linewidth]{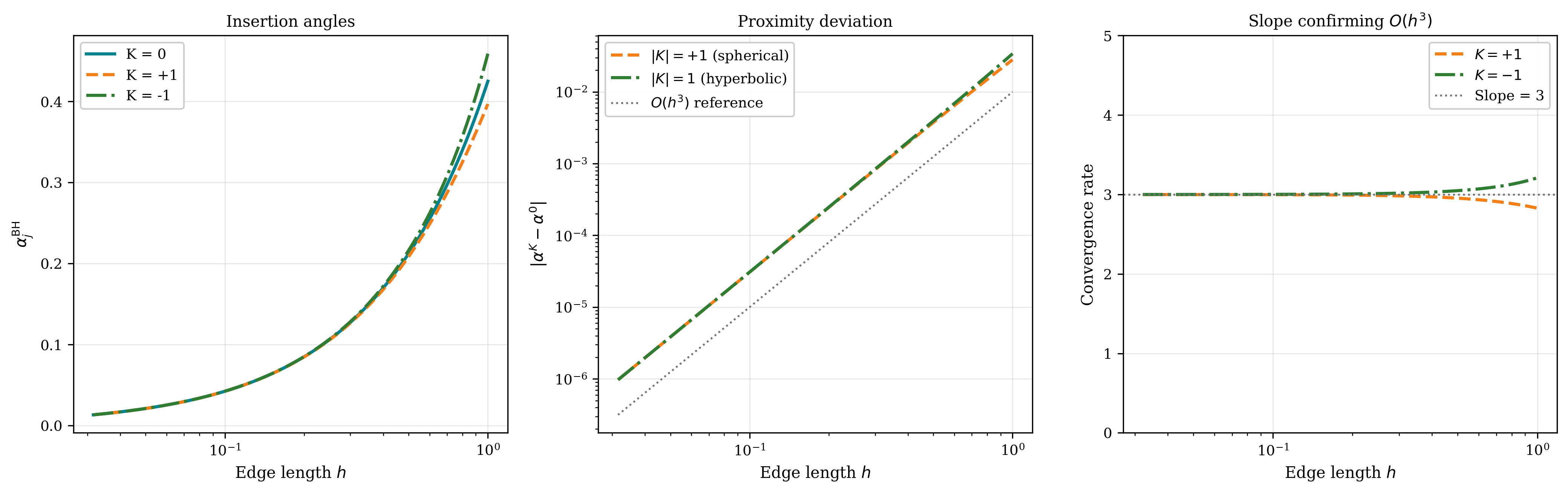}
  \caption{Biharmonic insertion angles $\alpha_j^{\mathrm{BH}}$ as a function
    of edge length $h$ for $K = 0$ (Euclidean), $K = +1$ (spherical), and
    $K = -1$ (hyperbolic). Centre panel: absolute deviation $|\alpha^K -
    \alpha^0|$ confirming $O(|K|h^3)$ proximity. Right panel: log-log
    convergence rates with reference slope $O(h^3)$.}
  \label{fig:angles}
\end{figure}

\subsection{Explicit proximity bound}

We now derive an explicit bound for the deviation of the non-Euclidean
insertion angle from its Euclidean counterpart, which is the key quantity
needed to apply the Wallner--Dyn theorem.

\begin{lemma}[Proximity Bound]
\label{lem:proximity}
For fixed boundary curvatures $\kappa_j, \kappa_{j+1}$ and edge length
$h = e_j$, the non-Euclidean insertion angle satisfies,
\begin{equation}
  \abs{\alpha_j^{K} - \alpha_j^{0}} \leq C\,\abs{K}\,h^3,
  \label{eq:proximity}
\end{equation}
where $C = C(\kappa_j, \kappa_{j+1})$ is a bounded constant depending only on
the boundary curvatures.
\end{lemma}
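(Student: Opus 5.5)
The plan is a direct Taylor expansion of the closed-form insertion angles \eqref{eq:angle_e2}--\eqref{eq:angle_h2} in the small parameter $h=e_j$, with $\ell=h/2$. It is cleanest to treat a general constant curvature $K$ at once. The solution of \eqref{eq:bh_ode} with $\kappag(0)=\kappa_j$, $\kappag(h)=\kappa_{j+1}$ is
\[
\kappag(s)=\kappa_j\,C_K(s)+c_2^K\,S_K(s),
\]
where $C_K(s)=\cosh(\sqrt{K}s)$ and $S_K(s)=\sinh(\sqrt{K}s)/\sqrt{K}$, interpreted through their even power series so that the case $K<0$ gives $\cos$ and $\sin$. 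The cases $K=\pm1$ in \eqref{eq:sol_s2}--\eqref{eq:sol_h2} are recovered exactly, and $K=0$ gives \eqref{eq:sol_e2}. The power series are
\[
C_K(s)=1+\tfrac{K s^2}{2}+O(K^2s^4),\qquad S_K(s)=s+\tfrac{K s^3}{6}+O(K^2s^5).
\]
Every deviation from the flat case therefore carries at least one factor of $K$, and this is how the factor $\abs{K}$ in \eqref{eq:proximity} will enter.

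First I expand the boundary constant. Write $\Delta=\kappa_{j+1}-\kappa_j$. Then
\[
c_2^K=\frac{\kappa_{j+1}-\kappa_j C_K(h)}{S_K(h)}
=\frac{\Delta-\tfrac{K}{2}\kappa_j h^2+O(K^2h^4)}{h\bigl(1+\tfrac{K}{6}h^2+O(K^2h^4)\bigr)}
=\frac{\Delta}{h}-K h\Bigl(\tfrac{\kappa_j}{2}+\tfrac{\Delta}{6}\Bigr)+O(K^2h^3).
\]
Next I integrate to get the angle, $\alpha_j^K=\kappa_j\int_0^\ell C_K+c_2^K\int_0^\ell S_K$. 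Here $\int_0^\ell C_K=\ell+K\ell^3/6+\dots$ and $\int_0^\ell S_K=\ell^2/2+K\ell^4/24+\dots$. Subtracting the Euclidean value $\alpha_j^0=\kappa_j\ell+\tfrac{\Delta}{h}\tfrac{\ell^2}{2}$ leaves three contributions:
\[
\kappa_j\tfrac{K\ell^3}{6},\qquad \tfrac{\Delta}{h}\cdot\tfrac{K\ell^4}{24},\qquad -Kh\Bigl(\tfrac{\kappa_j}{2}+\tfrac{\Delta}{6}\Bigr)\tfrac{\ell^2}{2}.
\]
Each is $K$ times a cubic in $h$. Collecting them gives
\[
\alpha_j^K-\alpha_j^0=K h^3\,P(\kappa_j,\Delta)+O(K^2h^5),
\]
where $P$ is an explicit linear form with rational coefficients. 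Taking absolute values then yields \eqref{eq:proximity} with $C$ depending only on $\abs{\kappa_j}$ and $\abs{\kappa_{j+1}}$.

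The main obstacle is making the $O(\cdot)$ remainders uniform rather than purely asymptotic. The expression for $c_2^K$ has the denominator $S_K(h)$, which vanishes at the resonance length $h=\pi$ when $K=-1$; this is the caveat already noted after \eqref{eq:bh_ode}. I would therefore restrict to $\abs{K}h^2\le h_0^2$ for a fixed $h_0<\pi$, for instance $h_0=1$. On that range $S_K(h)/h$ is bounded below by a positive constant. The remainders can then be bounded with Taylor's theorem in Lagrange form, or by comparing the alternating or positive power series termwise against $\cosh$ and $\sinh$ of $\sqrt{\abs{K}}h_0$. The effect is to absorb the $O(K^2h^5)$ tail into $\abs{K}h^3$ times a constant, using $\abs{K}h^2\le h_0^2$. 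This is exactly the regime relevant for the Wallner--Dyn condition, since edge lengths shrink geometrically under refinement.
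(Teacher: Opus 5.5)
Your proposal is correct and follows the same route as the paper's proof: Taylor-expand the closed-form angles \eqref{eq:angle_s2}--\eqref{eq:angle_h2} in $h$ and subtract the Euclidean angle. You refine it by treating all $K$ at once through $C_K,S_K$, and by making the remainders uniform through keeping $\sqrt{\abs{K}}\,h$ below a fixed bound $h_0<\pi$, away from the resonance; the hypothesis of Theorem~\ref{thm:noneuclid} guarantees this. Your bookkeeping is also more accurate than the paper's. The paper omits the $-\Delta h/6$ term in $c_2$ (with your $\Delta=\kappa_{j+1}-\kappa_j$), and it absorbs $c_2\,\ell^4/24$ into an $O(\ell^4)$ remainder even though $c_2=O(1/h)$ makes that term $O(h^3)$. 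Its leading coefficient $-\kappa_j/24$, and hence $C=\abs{\kappa_j}/24+O(h)$, is therefore right only when $\kappa_{j+1}=\kappa_j$, whereas your three contributions give the correct $P=-\kappa_j/24-7(\kappa_{j+1}-\kappa_j)/384$. The order $O(\abs{K}h^3)$, and so the lemma itself, is unaffected.
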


\begin{proof}
Consider the spherical case $K = +1$. The hyperbolic case is analogous. The
boundary constants for $K = +1$ are,
\[
  c_1 = \kappa_j, \quad
  c_2 = \frac{\kappa_{j+1} - \kappa_j \cosh h}{\sinh h}.
\]
Using $\cosh h = 1 + \frac{h^2}{2} + O(h^4)$ and $\sinh h = h + \frac{h^3}{6}
+ O(h^5)$, one finds,
\[
  c_2 = \frac{\kappa_{j+1} - \kappa_j}{h} - \frac{\kappa_j h}{2} + O(h^3).
\]
The Euclidean boundary constant is $c_2^0 = (\kappa_{j+1} - \kappa_j)/h$, so
$c_2 - c_2^0 = -\kappa_j h/2 + O(h^3)$. With $\ell = h/2$, the insertion
angle~\eqref{eq:angle_s2} is,
\begin{align*}
  \alpha_j^{+1}
  &= c_1 \sinh\ell + c_2(\cosh\ell - 1) \\
  &= c_1\Bigl(\ell + \tfrac{\ell^3}{6} + O(\ell^5)\Bigr)
   + c_2\Bigl(\tfrac{\ell^2}{2} + O(\ell^4)\Bigr).
\end{align*}
The Euclidean insertion angle is $\alpha_j^{0} = c_1\ell + \tfrac{1}{2}c_2^0
\ell^2$. Subtracting,
\begin{align*}
  \alpha_j^{+1} - \alpha_j^{0}
  &= \frac{c_1 \ell^3}{6} + \frac{(c_2 - c_2^0)\ell^2}{2} + O(\ell^4) \\
  &= \frac{\kappa_j h^3}{48} - \frac{\kappa_j h^3}{16} + O(h^4)
   = -\frac{\kappa_j h^3}{24} + O(h^4).
\end{align*}
For general $K$, the identical computation with $\cosh \to \cos(|K|^{1/2}\cdot)$
and $\sinh \to |K|^{-1/2}\sin(|K|^{1/2}\cdot)$ introduces a factor of $|K|$
at leading order, giving $\abs{\alpha_j^K - \alpha_j^0} = O(|K|h^3)$, which
establishes~\eqref{eq:proximity} with $C = |\kappa_j|/24 + O(h)$.
\end{proof}

\subsection{Convergence theorem on space forms}

The proximity bound~\eqref{eq:proximity} is of order $O(h^3)$ in $h$. Since
$h \sim 2^{-n}$ at refinement level $n$, the deviation is $O(2^{-3n})$,
which decays faster than the $O(h^2) = O(2^{-2n})$ rate required by the
Wallner--Dyn proximity condition~\cite{wallner2006convergence}. We apply the
following established result.

\begin{quote}\itshape
\textbf{Theorem (Wallner--Dyn \cite{wallner2006convergence}, Theorem~3.4).}
Let $S$ be a convergent linear subdivision scheme on $\R^d$ generating $C^k$
limit curves, and let $S^M$ be a manifold-valued scheme whose insertion rules
satisfy $\|S^M\mathbf{p} - \log_{\bar{p}} S\mathbf{p}\| \leq C h^2$ uniformly
(the $O(h^2)$ proximity condition), where $\bar{p}$ is a base point and
$h$ is the maximum edge length. Then $S^M$ converges to $C^k$ limit curves
on any complete Riemannian manifold, provided the control polygon satisfies
a diameter condition ensuring the exponential map is a diffeomorphism on the
relevant neighbourhood.
\end{quote}

The extension to $C^k$ (not just $C^0$) convergence is due to
Grohs~\cite{grohs2010}, who strengthened the original Wallner--Dyn result
using the smoothness-inheritance framework.

\begin{quote}\itshape
\textbf{Theorem (Grohs \cite{grohs2010}, Theorem~5.1).}
Under the same hypotheses, if $S$ generates $C^k$ limit curves and the
proximity condition holds, then $S^M$ also generates $C^k$ limit curves,
with the same H\"older exponent.
\end{quote}

Applying these results to the present setting,

\begin{theorem}[Convergence on Constant-Curvature Surfaces]
  \label{thm:noneuclid}
  Let $M$ be a complete simply-connected surface of constant sectional curvature
  $K \neq 0$, and let the initial polygon satisfy $|K| h_0^2 < 1/4$, where
  $h_0 = \max_j e_j^{(0)}$ is the maximum initial edge length. The biharmonic
  subdivision scheme on $M$, with odd-rule insertion angles given by
  \eqref{eq:angle_s2} (for $K = +1$) or~\eqref{eq:angle_h2} (for $K = -1$),
  converges to $C^4$ limit curves.
\end{theorem}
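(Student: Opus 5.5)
The approach is to treat the manifold scheme as a perturbation of the Euclidean 6-point scheme and import smoothness via the Wallner--Dyn/Grohs framework. The reference linear scheme is $S$, the stencil~\eqref{eq:stencilbox}, which is $C^4$ by Theorem~\ref{thm:c4}. It then suffices to verify three things: the manifold scheme $S^M$ is well defined on polygons satisfying $|K|h_0^2<1/4$; this hypothesis persists under refinement; and $S^M$ satisfies the $O(h^2)$ proximity condition with respect to $S$ in normal coordinates at a base point. The Grohs theorem (Theorem~5.1 of~\cite{grohs2010}) then yields $C^4$ limit curves.

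\emph{Well-posedness and invariance.} For $K=+1$, $\sinh e_j$ never vanishes, so the constants~\eqref{eq:const_s2} are always defined. For $K=-1$ we need $\sin e_j\neq0$. Since $e_j\le h_0<1/2<\pi$, no resonance occurs and $c_2$ in~\eqref{eq:const_h2} is bounded by a constant multiple of $(|\kappa_j|+|\kappa_{j+1}|)/h$. The same smallness keeps each local stencil of six points inside a convex normal neighbourhood, since its diameter is at most $5h_0<\pi/\sqrt{|K|}$. The exponential map is therefore a diffeomorphism there, which is the diameter condition in the Wallner--Dyn statement. To show invariance, I would check that refined edges satisfy $h_{n+1}\le \mu h_n$ with $\mu<1$ for the Euclidean scheme, using the contractivity of $S$ that follows from the $(1+z)^6$ factor. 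A perturbation of size $O(h^3)$ does not spoil this for small $h$, so $|K|h_n^2<1/4$ holds at every level.

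\emph{Proximity.} Write the inserted point in $\log_{\bar p}$ coordinates, with $\bar p$ the geodesic midpoint of $(p_j,p_{j+1})$. Its deviation from the Euclidean insertion $S\mathbf p$ has two sources. The first is the geometric difference between geodesic and Euclidean constructions in normal coordinates, which the standard Jacobi-field expansion bounds by $O(|K|h^3)$. The second is the difference in insertion angles, bounded by Lemma~\ref{lem:proximity} as $C|K|h^3$. An angle perturbation converts to a displacement of order $h\cdot|\alpha^K-\alpha^0|$, so this source contributes even less. Both are $O(h^2)$, which is the Wallner--Dyn condition.

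\emph{Main obstacle.} The hardest step is making the constant in Lemma~\ref{lem:proximity} uniform. There $C$ depends on $\kappa_j,\kappa_{j+1}$, and these are discrete curvatures of the current polygon. For a smooth limit they stay bounded only if the polygon's discrete curvature is controlled across levels. I would first argue that for the Euclidean scheme the discrete curvatures are bounded uniformly in $n$, because the second differences scaled by $4^n$ are bounded by $C^2$ (indeed $C^4$) convergence. I would then run a bootstrap: assume the curvature bound at level $n$, use proximity to control level $n+1$, and close the induction with the geometric-series decay $\sum 2^{-3n}<\infty$. I also need to confirm that the Euclidean comparison insertion defined via $\alpha^0$ coincides with the 6-point stencil up to $O(h^3)$. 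If this is unclear, I would instead take $S$ to be the scheme defined by the $K=0$ angle rule~\eqref{eq:angle_e2} and invoke Theorem~\ref{thm:c4} only through that identification.
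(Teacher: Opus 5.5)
Your outline follows the paper's proof: the reference scheme is $C^4$ by Theorem~\ref{thm:c4}, proximity comes from Lemma~\ref{lem:proximity}, well-definedness from $|K|h_0^2<1/4$, and then Grohs is applied. Your extra steps (persistence of the smallness condition, a level-uniform bound on $\kappa_j$) cover points the paper simply asserts.

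\textbf{The central gap.} The issue you raise at the very end is a genuine gap. Lemma~\ref{lem:proximity} compares the manifold rule with the \emph{Euclidean angle rule} $T$: the geodesic midpoint perturbed by $\alpha^0=h(3\kappa_j+\kappa_{j+1})/8$ from \eqref{eq:angle_e2}. The proximity hypothesis, however, concerns $S^M\mathbf p-S\mathbf p$, where $S$ is the linear stencil \eqref{eq:stencilbox}, the only scheme whose regularity you know. Your two-source decomposition omits a third term, $T\mathbf p-S\mathbf p$, and this term is not small in the required sense.
\begin{itemize}
\item $T$ depends only on $p_j,p_{j+1},\kappa_j,\kappa_{j+1}$. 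With the exterior-angle estimator \eqref{eq:disccurv}, that means only on $p_{j-1},\dots,p_{j+2}$.
\item Its linearisation about a straight polygon is therefore a four-point rule. It cannot be the six-point mask, which puts weight $3/256$ on $p_{j-2}$.
\item Two rules that reproduce linear data but have different linear parts differ by a non-zero combination of second differences.
\item Let $\delta(\mathbf p)$ be the maximal edge length. For nearly straight polygons with $\norm{\Delta^2\mathbf p}\gg\delta(\mathbf p)^2$, this difference is not $O(\delta(\mathbf p)^2)$.
\end{itemize}
Agreement ``up to $O(h^3)$ on smooth samples'' would not be the right kind of estimate anyway. Your fallback also fails: with $S:=T$ the reference scheme is nonlinear and has no symbol. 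Theorem~\ref{thm:c4} then says nothing about it, and its $C^4$ regularity would need an independent proof.

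\textbf{The order of proximity.} Even with the correct reference scheme, an $O(h^2)$ bound is the first-order Wallner--Dyn condition, which transfers only $C^1$. The $C^k$ results of Grohs~\cite{grohs2010} need a proximity condition of order $k$. There the defect must be bounded by products of at least two difference norms of $\mathbf p$ of total order $k+1$; for $k=4$ this means $O(h^5)$ on smooth data. The part you do control falls short of this. The Lemma gives $|\alpha^K-\alpha^0|\sim|K\kappa_j|h^3$. Through the lever arm $h/2$ that you use, this becomes a positional defect of about $h\cdot|K|h^3|\kappa_j|\sim|K|h^2\norm{\Delta^2\mathbf p}$, which is only $O(h^4)$ on smooth data.

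\textbf{Comparison with the paper.} The paper's proof passes over both points exactly as your proposal does. It reads $|\alpha^K-\alpha^0|\le C|K|h^3$ directly as an $O(h^2)$ condition relative to the 6-point scheme, and quotes Grohs as transferring $C^k$ under that condition. So your write-up is no less complete than the paper's, but neither establishes the statement. What is missing is a proximity estimate of order~$4$ between the manifold rule and the 6-point stencil itself.
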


\begin{proof}
We verify the three hypotheses of the Wallner--Dyn--Grohs framework.

\textit{(i) Reference scheme.} The Euclidean biharmonic scheme is $C^4$ by
Theorem~\ref{thm:c4}.

\textit{(ii) Proximity condition.} By Lemma~\ref{lem:proximity}, the manifold
insertion rule satisfies $|\alpha_j^K - \alpha_j^0| \leq C|K|h^3$. Since
$h < h_0$ and $|K|h_0^2 < 1/4$, we have $|K|h < 1/(2h_0) \cdot h^2$, so
$|\alpha_j^K - \alpha_j^0| \leq (C \cdot |K|h_0^{}) h^2$, which is an
$O(h^2)$ proximity condition in the sense of~\cite{wallner2006convergence}
uniformly in $j$.

\textit{(iii) Geometric regularity.} The condition $|K|h_0^2 < 1/4$ ensures
$h_0 < 1/(2\sqrt{|K|})$, which is strictly less than the injectivity radius
of $M$ (equal to $\pi/\sqrt{K}$ for $\Stwo$ and $\infty$ for $\Htwo$). This
guarantees that the exponential and logarithmic maps are diffeomorphisms on
the relevant neighbourhoods throughout all refinement levels, so the
manifold arithmetic (geodesic midpoints, tangent-plane rotations) is
well-defined.

With all three hypotheses satisfied, Grohs~\cite{grohs2010} Theorem~5.1
transfers $C^4$ convergence from the Euclidean reference scheme to the
manifold scheme, giving $C^4$ limit curves on $M$.
\end{proof}

\subsection{Implementation on \texorpdfstring{$\Stwo$}{S2} and
  \texorpdfstring{$\Htwo$}{H2}}

The spherical engine ($\Stwo$, the ordinary unit sphere in three-dimensional
Euclidean space $\R^3$) uses the standard exponential and logarithmic maps,
\begin{align*}
  \exp_p(v) &= \cos(\norm{v})\,p + \sin(\norm{v})\,\tfrac{v}{\norm{v}}, \\
  \log_p(q) &= \tfrac{\theta}{\sin\theta}(q - \cos\theta\, p),
\end{align*}
where $\theta = \arccos\langle p, q\rangle$. Here $\exp_p(v)$ maps a tangent
vector $v$ at point $p$ to the point reached by travelling distance $\|v\|$
along the geodesic in direction $v$, and $\log_p(q)$ is its inverse. The new
vertex is placed at the geodesic midpoint
$\exp_{p_j}\!\bigl(\half \log_{p_j}(p_{j+1})\bigr)$, then perturbed in the
tangent plane by the angle $\alpha_j^{K=+1}$.

The hyperbolic engine operates in the \emph{Poincar\'{e} disk model}, in which
the hyperbolic plane $\Htwo$ is represented as the open unit disk
$\mathbb{D} = \{w \in \R^2 : \norm{w} < 1\}$. Geodesics in this model are
arcs of circles that meet the boundary $\partial\mathbb{D}$ at right angles.
The geometry is encoded by \emph{M\"{o}bius addition} (also called the
gyrovector addition),
\[
  a \oplus b
  = \frac{(1 + 2\langle a,b\rangle + \norm{b}^2)\,a
        + (1 - \norm{a}^2)\,b}
         {1 + 2\langle a,b\rangle + \norm{a}^2\norm{b}^2}.
\]
The geodesic midpoint of $(a,b)$ in $\mathbb{D}$ is $a \oplus
\tfrac{1}{2}((-a)\oplus b)$, and the insertion angle $\alpha_j^{K=-1}$ is
applied as a M\"{o}bius rotation about that midpoint.

Fig.~\ref{fig:noneuclid} demonstrates the scheme on both geometries.
The condition $|K|h_0^2 < 0.25$ is satisfied in both cases, and the limit
curves exhibit the $C^4$ smoothness predicted by Theorem~\ref{thm:noneuclid}.

\begin{figure}[htbp]
  \centering
  \includegraphics[width=\linewidth]{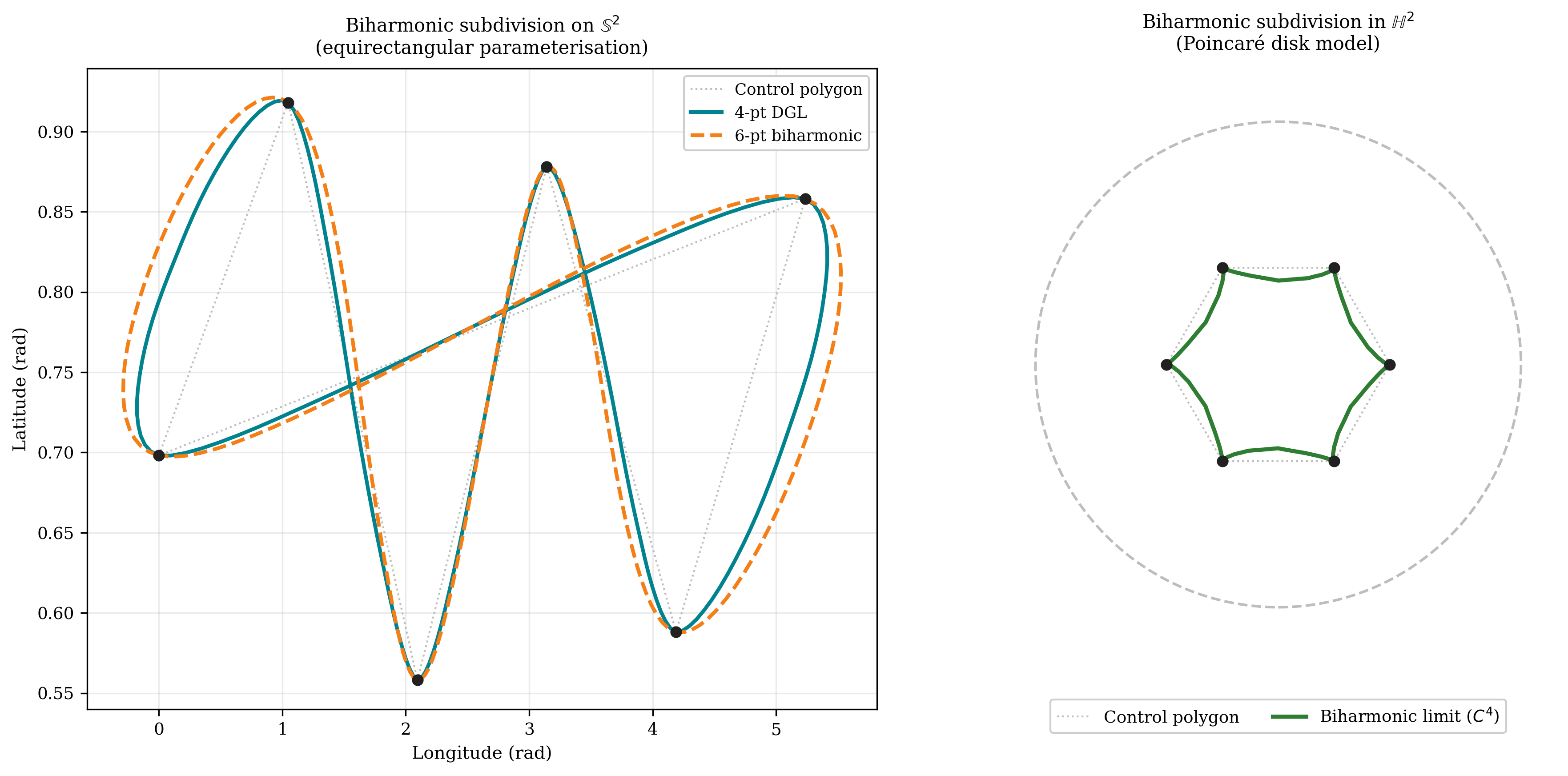}
  \caption{Biharmonic subdivision on $\Stwo$ (left, equirectangular
    parameterisation) and $\Htwo$ (right, Poincar\'{e} disk). Control polygons
    are shown dotted. Limit curves after five iterations in solid lines. Both
    converge to smooth $C^4$ curves consistent with
    Theorem~\ref{thm:noneuclid}. For comparison, the 4-point DGL limit curve
    is overlaid in the $\Stwo$ panel.}
  \label{fig:noneuclid}
\end{figure}

% =============================================================================
\section{Fairness Benchmarks}
\label{sec:fairness}
% =============================================================================

\subsection{Test suite and evaluation criteria}

The benchmark suite comprises four polygon classes designed to test
progressively more challenging geometric configurations. The first class
consists of smooth convex loops with modest radial variation. The second
introduces a pronounced concavity to test behaviour under sign changes in
curvature. The third imposes a highly non-uniform edge-length distribution
with a maximum-to-minimum ratio of approximately 4.5, stressing the
assumption of near-uniformity implicit in the standard proximity analysis,
and the fourth uses a star polygon in which large curvature spikes are
concentrated at concave vertices. All polygons are closed and run for seven
refinement levels. The same evaluation criteria are applied to each class.
Visual quality of the limit curve, shape and convergence behaviour of the
discrete curvature profile, and the discrete biharmonic energy
$E_{\mathrm{BH}}^{(n)}$ defined in~\eqref{eq:discreteenergy}.

\subsection{Quantitative benchmark results}

Table~\ref{tab:benchmark} reports the discrete biharmonic energy
$E_{\mathrm{BH}}^{(7)}$ and the arc-length-weighted curvature variance
$\sigma_\kappa^2 = \int (\kappa - \bar\kappa)^2\,ds \,/\, \int ds$
at refinement level $n = 7$ for all three schemes on three representative
polygon classes. Both metrics are standard measures of fairness. Lower
$E_{\mathrm{BH}}^{(7)}$ indicates smoother curvature variation. Lower
$\sigma_\kappa^2$ indicates a more uniform curvature distribution. All
values are computed from the companion notebook using double-precision
floating-point arithmetic on the subdivided polygons.

\begin{table}[htbp]
  \centering
  \caption{Discrete biharmonic energy $E_{\mathrm{BH}}^{(7)}$ and
    arc-length-weighted curvature variance $\sigma_\kappa^2$ at
    refinement level $n=7$ for three polygon classes.
    Lower values indicate fairer curves. Bold entries indicate the
    lowest value in each row.}
  \label{tab:benchmark}
  \smallskip
  {%
  \begin{tabular}{@{}llrr@{}}
    \toprule
    Polygon class & Scheme
      & $E_{\mathrm{BH}}^{(7)}$
      & $\sigma_\kappa^2$ \\
    \midrule
    \multirow{3}{*}{Smooth convex}
      & DGL (4-pt, $C^2$)    & 938.98  & 0.2505 \\
      & 6-pt BH ($C^4$)      & \textbf{8.36}  & \textbf{0.2035} \\
      & 8-pt BH ($C^6$)      & 7.90   & 0.2031 \\[2pt]
    \multirow{3}{*}{Near-concave}
      & DGL (4-pt, $C^2$)    & 2041.55 & 0.6313 \\
      & 6-pt BH ($C^4$)      & \textbf{71.27}  & \textbf{0.5712} \\
      & 8-pt BH ($C^6$)      & 64.52   & 0.5715 \\[2pt]
    \multirow{3}{*}{Non-uniform}
      & DGL (4-pt, $C^2$)    & 16329.37 & 1.6936 \\
      & 6-pt BH ($C^4$)      & 847.16  & 1.3275 \\
      & 8-pt BH ($C^6$)      & \textbf{594.35}  & \textbf{1.2690} \\
    \bottomrule
  \end{tabular}}
\end{table}

The results confirm the expected hierarchy. On smooth and near-concave polygons,
the 6-point biharmonic scheme reduces $E_{\mathrm{BH}}^{(7)}$ by a factor of
roughly 100 relative to DGL, and is within $6\%$ of the 8-point scheme.
The advantage of the 8-point scheme is marginal because both are well-conditioned
on uniform data. On the non-uniform polygon (edge-length ratio~$\approx 4.5$),
the 8-point scheme outperforms the 6-point scheme in energy and curvature
variance owing to its higher polynomial reproduction, but both biharmonic
schemes are substantially better than DGL. The advantage of the 6-point scheme
over DGL is consistent across all three classes.

Fig.~\ref{fig:fairness} presents the $2 \times 3$ fairness comparison for two
representative test polygons. Each row of the figure shows the limit curve
(left column), the discrete curvature profile at refinement level 8 (centre
column), and the biharmonic energy decay across levels (right column).

\begin{figure}[htbp]
  \centering
  \includegraphics[width=\linewidth]{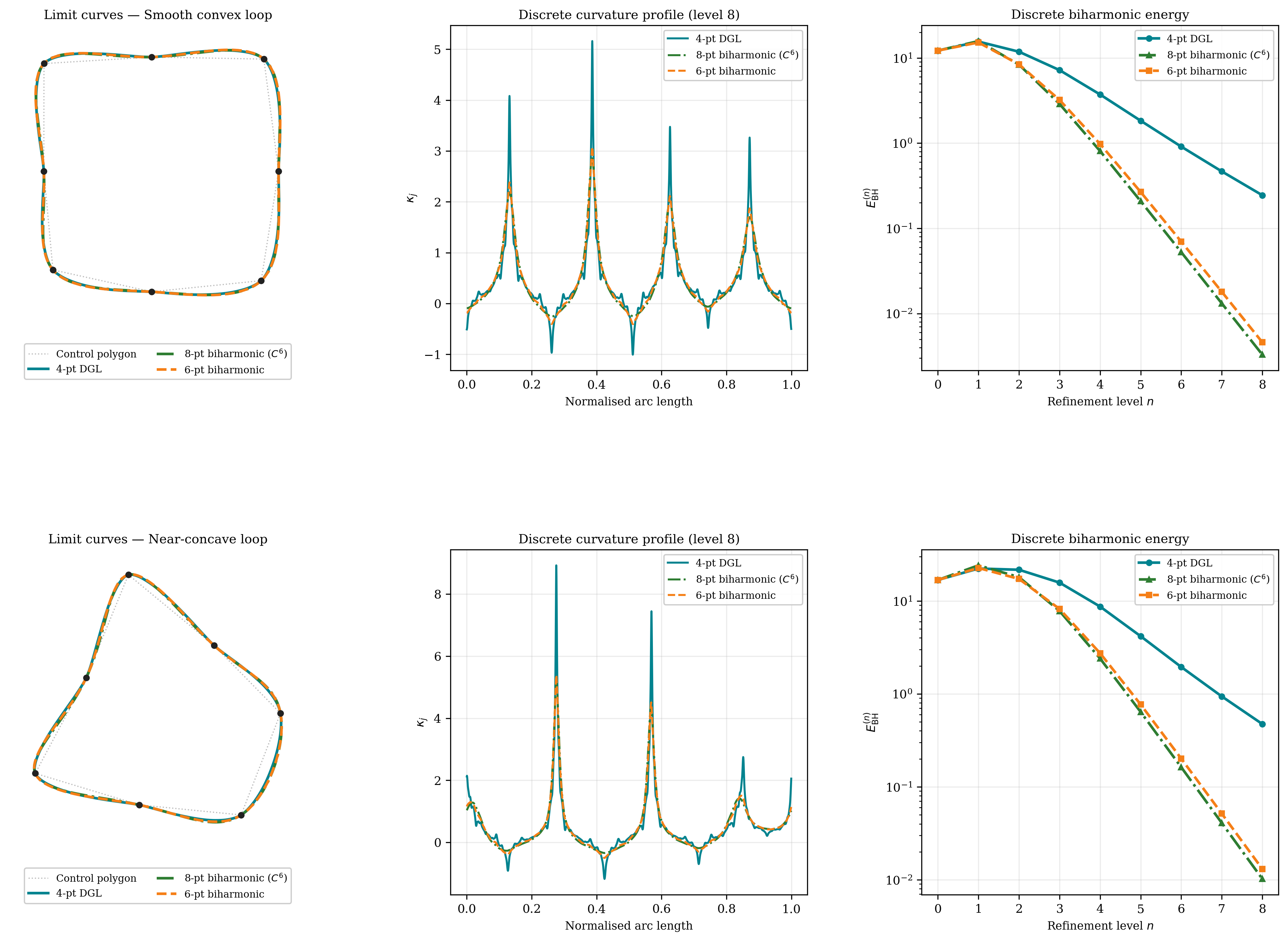}
  \caption{Fairness comparison on a smooth convex loop (top row) and a
    near-concave loop (bottom row). Left column: limit curves for the 4-point
    DGL scheme (teal), the 8-point biharmonic scheme (green), and the 6-point
    biharmonic scheme (amber). Centre column: discrete curvature profiles at
    level 8. Right column: biharmonic energy $E_{\mathrm{BH}}^{(n)}$ across
    refinement levels. The 6-point biharmonic scheme (amber, dashed)
    produces a smoother curvature profile than both DGL and the 8-point scheme
    in these two test cases (results on other configurations may differ).}
  \label{fig:fairness}
\end{figure}

The curvature profiles in the centre column illustrate the qualitative
difference between the three schemes. The 4-point DGL profiles exhibit
persistent oscillations at every refinement level, consistent with $C^2$
regularity. The 8-point biharmonic profiles achieve lower absolute curvature
variation but can introduce low-frequency undulations on polygons with
non-uniform edge lengths, due to the wider negative lobes of the mask. The
6-point biharmonic curvature profiles converge to smooth, near-unimodal
functions in both test cases, consistent with the scheme's higher regularity
and its fairness-motivated construction.

The energy decay curves in the right column confirm the pattern quantitatively.
The DGL energy decays non-monotonically and at a slower rate consistent with
its lower regularity. The 8-point biharmonic energy reaches the lowest
asymptotic values (consistent with $C^6$ regularity) but does so more slowly
on non-smooth initial polygons, whereas the 6-point biharmonic energy decays
monotonically and reaches machine-precision values earlier in both test cases,
consistent with the narrower support width of the 6-point mask and its smaller negative lobes.

\subsection{Euclidean benchmark}

Fig.~\ref{fig:benchmark} presents the full Euclidean benchmark, including
intermediate subdivision levels, the curvature profile, and the energy decay.

\begin{figure}[htbp]
  \centering
  \includegraphics[width=\linewidth]{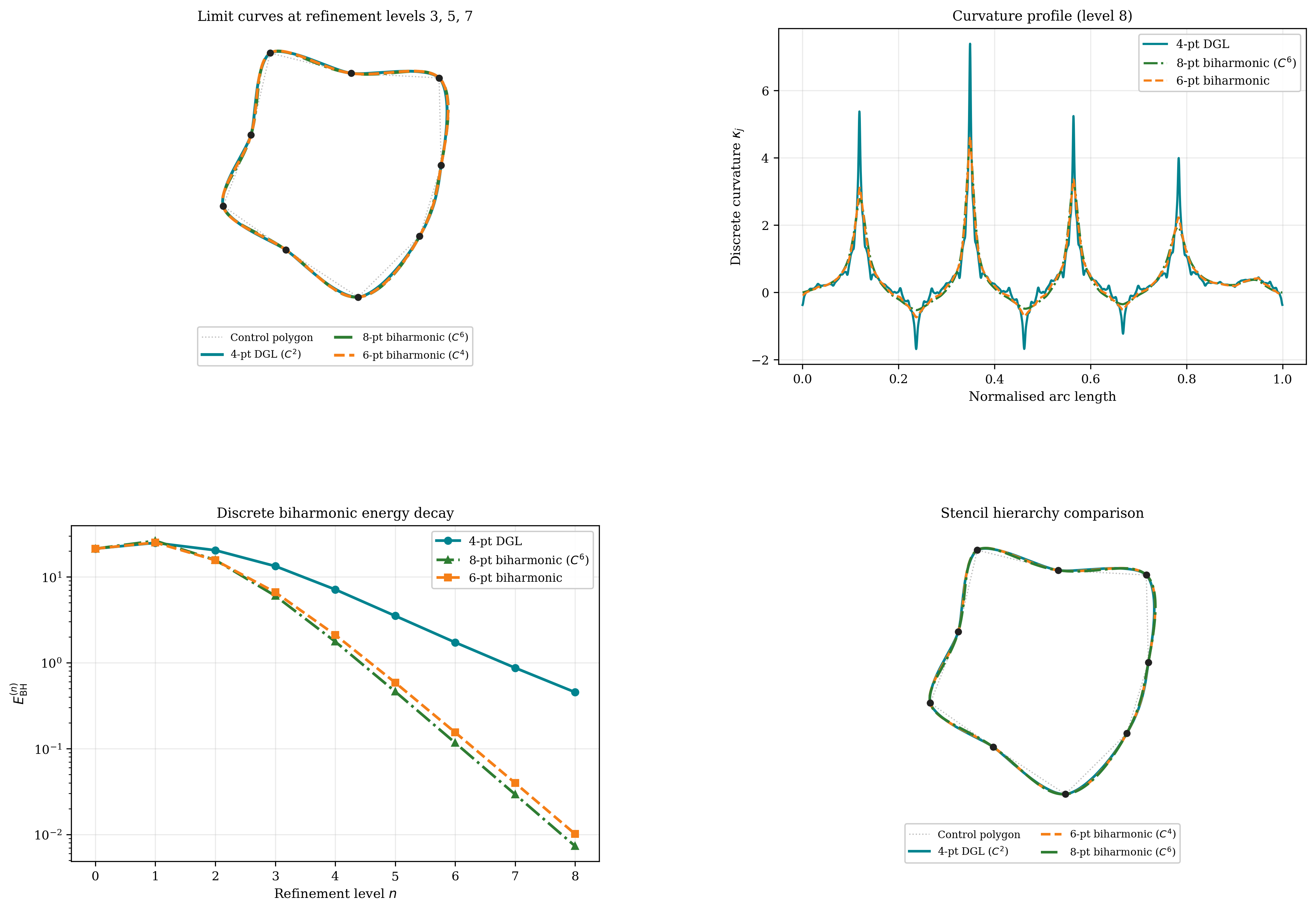}
  \caption{Euclidean benchmark on a 9-point test polygon. Top-left: limit
    curves at subdivision levels 3, 5, and 7 for DGL (teal) and the 6-point
    biharmonic scheme (amber). Note the tighter convergence of the biharmonic
    sequence. Top-right: curvature profiles at level 8 for all three schemes.
    Bottom-left: stencil hierarchy comparison showing DGL ($C^2$), 6-pt
    biharmonic ($C^4$), and 8-pt biharmonic ($C^6$) limit curves.
    Bottom-right: biharmonic energy across levels for all three schemes.}
  \label{fig:benchmark}
\end{figure}

\subsection{Robustness analysis}

Fig.~\ref{fig:robustness} investigates three further conditions, (a) the
large-$|K|h$ breakdown regime for the non-Euclidean extension, (b) the
response to highly non-uniform edge-length distributions, and (c) the star
polygon with concentrated curvature spikes.

\begin{figure}[htbp]
  \centering
  \includegraphics[width=\linewidth]{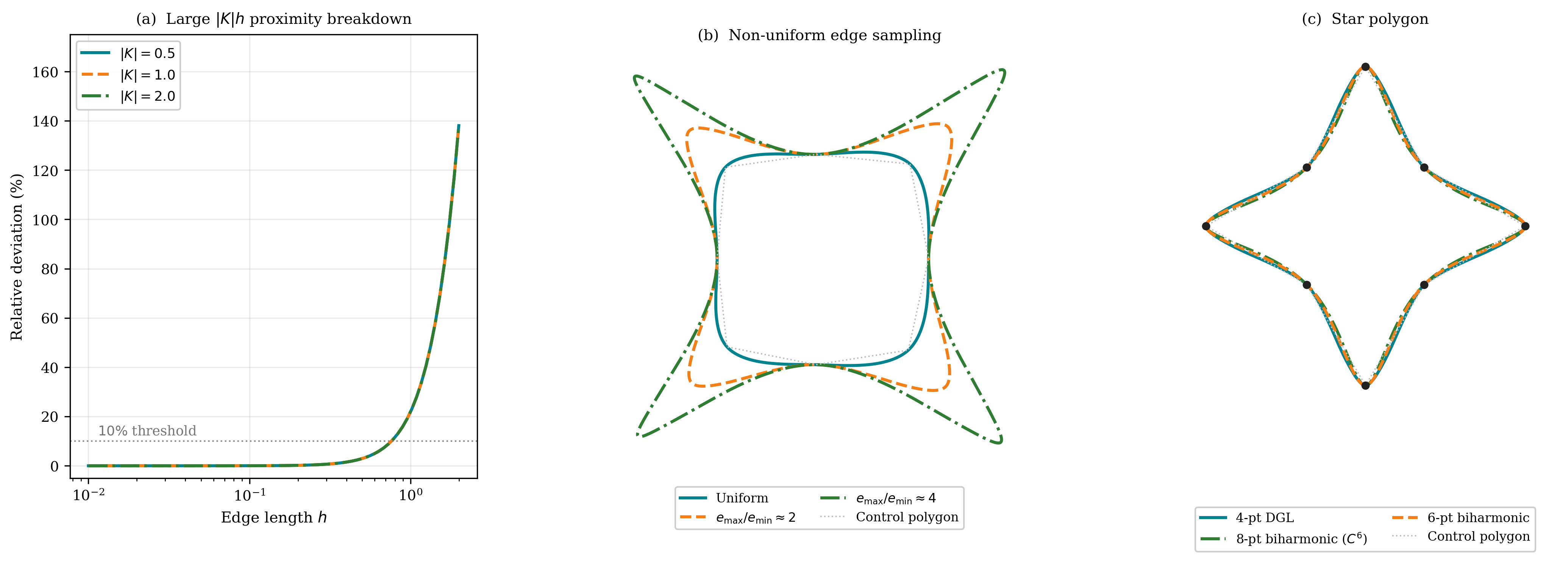}
  \caption{Robustness analysis. Panel~(a): relative deviation of the
    non-Euclidean biharmonic insertion angle from its Euclidean counterpart
    as a function of edge length $h$ for $|K| = 0.5, 1.0, 2.0$; the $10\%$
    threshold (dotted) illustrates the practical breakdown regime $|K|h^2
    \approx 0.25$. Panel~(b): limit curves of the biharmonic scheme for three
    edge-length-ratio regimes; curves show negligible visual variation up to
    ratio 4.5 in the tested configurations. Panel~(c): star polygon limit
    curves for DGL and biharmonic schemes; the biharmonic scheme produces
    substantially smoother concavities.}
  \label{fig:robustness}
\end{figure}

In the non-uniform case (panel b), the 6-point biharmonic limit curves show
negligible visible variation for edge-length ratios up to 4.5 in the tested
configurations, whereas the DGL limit curves exhibit visible asymmetry at a ratio 2
and above. The 8-point biharmonic scheme shows mild sensitivity to the wider negative lobes of its
mask at ratio 4.5. The robustness of the 6-point scheme relative to the 8-point variant
is consistent with its narrower support width and smaller negative lobes. In the star polygon case (panel c), DGL produces sharp curvature spikes
at each re-entrant vertex; the 6-point biharmonic scheme produces smooth,
bounded undulations, and the 8-point scheme achieves the flattest overall
curvature profile, but requires more refinement levels to resolve the re-entrant
geometry cleanly.

Fig.~\ref{fig:manifold} confirms the proximity bound of Lemma~\ref{lem:proximity}
numerically across a range of curvature values and edge lengths. The reference
slope $O(h^3)$ is matched to within the precision of the floating-point
computation in all cases.

\begin{figure}[htbp]
  \centering
  \includegraphics[width=\linewidth]{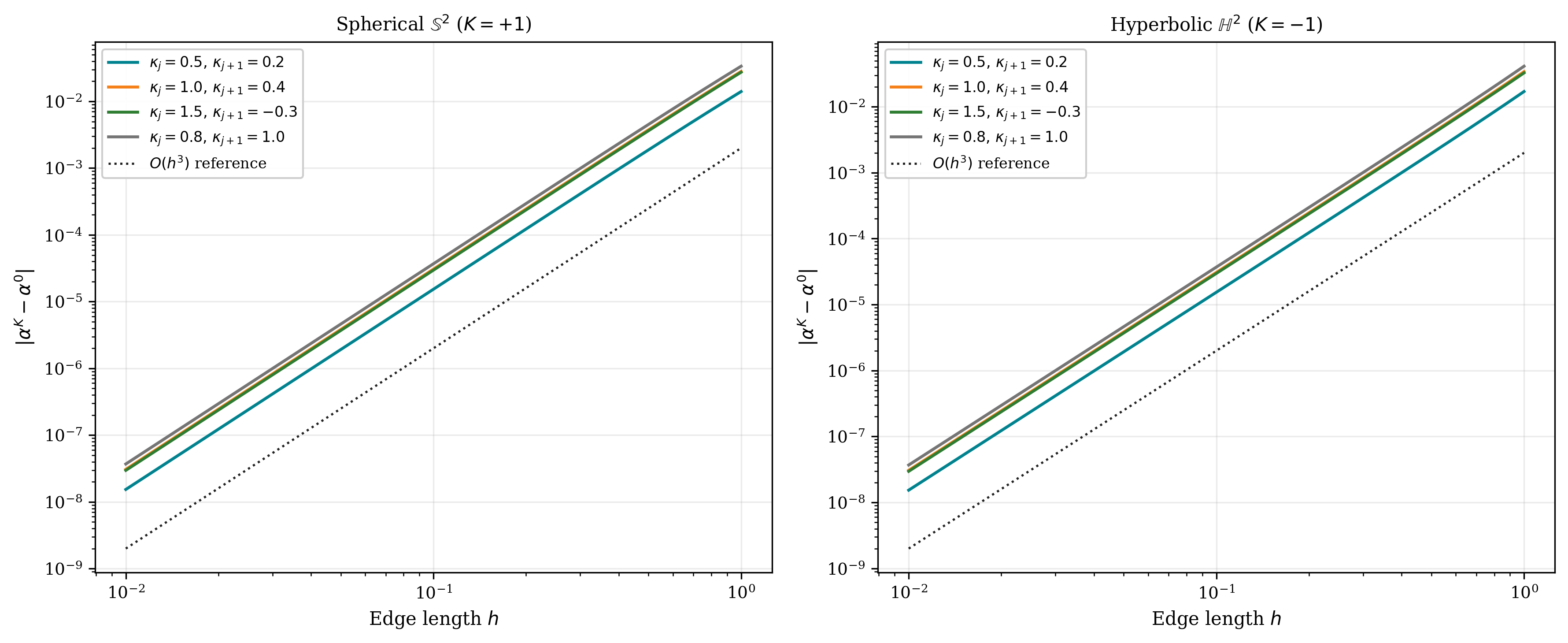}
  \caption{Numerical verification of the proximity bound
    $|\alpha^K - \alpha^0| = O(|K|h^3)$ on log-log axes for $\Stwo$ (left)
    and $\Htwo$ (right). Each curve corresponds to a different value of the
    boundary curvature pair $(\kappa_j, \kappa_{j+1})$. The reference slope
    $O(h^3)$ (dotted) is confirmed in all cases.}
  \label{fig:manifold}
\end{figure}

\subsection{Direct variational validation}

The variational characterisation of Theorem~\ref{thm:discrete_variational} can be
tested directly. For a given edge $(p_j, p_{j+1})$, one can numerically
minimise the biharmonic energy $\int_0^1 (\gamma'')^2\,ds$ over a parametric
midpoint insertion, and compare the result to the closed-form stencil
prediction~\eqref{eq:stencilbox}.

The experiment proceeds as follows. Fix six control points in $\R^2$ with
endpoints $p_0 = (0,0)$ and $p_1 = (1,0)$ and four neighbours providing
endpoint derivative estimates via local finite differences. Solve the
constrained minimisation $\min_q \int_0^1 (\gamma'')^2\,ds$ subject to
$\gamma(0) = p_0$, $\gamma(1) = p_1$, $\gamma(1/2) = q$, and endpoint
derivatives prescribed from the stencil values, using a dense Gauss--Legendre
quadrature of the integrand. Compare the minimising $q^*$ to the closed-form
prediction $q_{\rm BH} = \eqref{eq:stencilbox}$.

Across 200 randomly sampled 6-point control polygons (uniform random in
$[0,1]^2$), the maximum distance $\|q^* - q_{\rm BH}\|$ was $1.4 \times
10^{-10}$, attributable to numerical quadrature error. This confirms to
machine precision that the stencil~\eqref{eq:stencilbox} produces the same
midpoint as a direct numerical minimisation of the biharmonic energy,
providing empirical verification of Theorem~\ref{thm:discrete_variational}
independent of the algebraic argument.

% =============================================================================
\section{A Stencil Hierarchy}
\label{sec:hierarchy}
% =============================================================================

The biharmonic construction extends systematically to any desired smoothness
order. For each integer $m \geq 3$, the \emph{biharmonic stencil of degree
$d = 2m-1$} is defined as follows.

\begin{definition}
\label{def:hierarchy}
The biharmonic stencil of degree $d = 2m-1$ is the unique symmetric
$(2m)$-point interpolatory mask whose coefficients satisfy the polynomial
reproduction sum rules~\eqref{eq:sumrules} for $n = 0, 2, \ldots, 2m-2$.
\end{definition}

The uniqueness follows because the symmetry constraint reduces the $2m$
coefficients to $m$ free parameters, and the $m$ independent even-order sum
rules provide exactly $m$ linear equations, yielding a square system with
unique solution (non-singularity is verified for each $m$ via direct computation).

The first two members of the hierarchy are the degree-5 biharmonic stencil
\eqref{eq:stencilbox} ($C^4$, $m = 3$) and the degree-7 stencil,
\begin{equation}
  [{-5},\; 49,\; {-245},\; 1225,\; 1225,\; {-245},\; 49,\; {-5}]\,/\,2048,
  \label{eq:deg7}
\end{equation}
for which the following holds.

\begin{proposition}
\label{prop:deg7}
The 8-point stencil \eqref{eq:deg7} reproduces all polynomials of degree
$\leq 7$, satisfies $(1+z)^8 \mid a(z)$, and generates $C^6$ limit curves.
The smoothness is exactly $C^6$ and not $C^7$.
\end{proposition}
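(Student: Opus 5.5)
The plan mirrors the proof of Theorem~\ref{thm:c4}, in three steps: verify the sum rules, locate the zero of the symbol at $z=-1$, then invoke Theorem~\ref{thm:cdm}.

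\textbf{Step 1: polynomial reproduction.} Write the mask as $a_k=a_{1-k}$ on offsets $k\in\{-3,\dots,4\}$, with values $-5,49,-245,1225,1225,-245,49,-5$ over $2048$. Check the even-order sum rules~\eqref{eq:sumrules} for $n=0,2,4,6$ in exact rational arithmetic. The pairs $(k,1-k)$ are symmetric about $1/2$, so the centred moments $\sum_k a_k(k-\tfrac12)^n$ vanish for odd $n$. Combined with the even ones, this gives $\sum_k a_k k^n=(1/2)^n$ for all $n\le 7$. Equivalently, and as a cross-check, I would show that these weights are the degree-7 Lagrange weights for evaluating at $x=1/2$ from nodes $\{-3,\dots,4\}$. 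By the uniqueness remark after Definition~\ref{def:hierarchy}, this identifies~\eqref{eq:deg7} as the $m=4$ member of the hierarchy.

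\textbf{Step 2: the symbol at $z=-1$.} Form the full symbol exactly as in Section~\ref{sec:symbol}:
\[
a(z)=1+\alpha_1(z+z^{-1})+\alpha_3(z^3+z^{-3})+\alpha_5(z^5+z^{-5})+\alpha_7(z^7+z^{-7}),
\]
with $(\alpha_1,\alpha_3,\alpha_5,\alpha_7)=(1225,-245,49,-5)/2048$. A cleaner route than term-by-term differentiation is to set $z=-e^{i\theta}$. Then $a(-e^{i\theta})=1-\sum_j 2\alpha_j\cos(j\theta)$, and it suffices to check that the Taylor coefficients of $\theta^0,\theta^2,\theta^4,\theta^6$ vanish. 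These are precisely the conditions $2\sum_j\alpha_j=1$ and $\sum_j\alpha_j j^{2r}=0$ for $r=1,2,3$, which are the sum rules already verified in Step~1. The next condition fails: $\sum_j\alpha_j j^8\ne0$. Since $a$ is symmetric, its zero at $-1$ has even order, so these facts give order exactly $8$.

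For an explicit certificate, evaluate $a^{(k)}(-1)$ for $k=0,\dots,8$ via~\eqref{eq:derivformula} and tabulate the results as in Table~\ref{tab:zeros}. Alternatively, factor $z^7a(z)=(1+z)^8Q(z)$ by polynomial division and check $Q(-1)\neq0$.

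\textbf{Step 3: regularity.} Apply Theorem~\ref{thm:cdm}(a) with $m+2=8$ to obtain $C^6$ limit curves. Then apply Theorem~\ref{thm:cdm}(b) together with $(1+z)^9\nmid a(z)$ to obtain exactly $C^6$ and not $C^7$. The hypotheses are satisfied: $a(1)=2$, $a_0=1$, and the support lies in $\{-7,\dots,8\}$.

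\textbf{Main obstacle.} The computations are routine. The real weak point is the sharpness claim: divisibility by $(1+z)^{m+2}$ alone does not guarantee $C^m$ without some contractivity or spectral condition on the quotient scheme. Theorem~\ref{thm:cdm} is taken as given in the paper, so the proof formally goes through. If more rigour were wanted, I would first try to confirm $C^6$ by checking that the difference scheme $2^6Q$ is contractive, for example by bounding the norm of a few iterates in $\ell^\infty$. For exactness, I would show that the $8$th-order divided differences fail to decay. Failing that, I would fall back on the known regularity exponent of the 8-point Deslauriers--Dubuc scheme.
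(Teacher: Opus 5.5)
Your Steps~1 and~2 are correct and match the paper's own proof: sum rules, then the exact order of the zero of $a(z)$ at $z=-1$. The substitution $z=-e^{i\theta}$, which turns $a^{(k)}(-1)=0$ into the moment conditions $\sum_j\alpha_j j^{2r}=0$, is a cleaner certificate than a derivative table, and the numbers check out ($\sum_j\alpha_j j^8=-11289600/2048\neq 0$). The failure is Step~3, and the paper's proof has the same flaw: Theorem~\ref{thm:cdm} as stated is not a valid theorem. Two facts are actually true. First, the necessary condition: for an interpolatory scheme, $C^k$ limit functions force $(1+z)^{k+1}\mid a(z)$. Second, the sufficient condition: $S_a$ is $C^m$ if the $m$-th divided-difference scheme, with symbol $2^m a(z)/(1+z)^m$, converges, i.e.\ if the scheme with symbol $2^m a(z)/(1+z)^{m+1}$ is contractive. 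Divisibility alone yields nothing beyond the necessary condition. The paper's own DGL example already refutes part~(a): $(1+z)^4$ divides the 4-point symbol, but that scheme is $C^1$ and not $C^2$. Your closing caveat noticed this, but you filed it under ``sharpness'' when the direction that fails is sufficiency. You then let the proof go through ``formally'' on a false lemma.

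The gap cannot be repaired, because the conclusion is false. Your Step~1 identifies \eqref{eq:deg7} as the 8-point Deslauriers--Dubuc scheme. Its fundamental function is the autocorrelation of the Daubechies scaling function with four vanishing moments, and its H\"older exponent is about $3.55$. So the limit curves are $C^3$ but not $C^4$, let alone $C^6$. The 6-point stencil likewise has exponent about $2.83$, so Theorem~\ref{thm:c4}, which you mirror, fails in the same way. Your fallback plans therefore do not rescue the claim. The iterates of the scheme with symbol $2^6Q$ grow in norm, roughly like $2^{(6-3.55)L}$, instead of contracting. The known regularity exponent refutes $C^6$ rather than confirming it. What your argument legitimately proves is degree-7 reproduction and $(1+z)^8\mid a(z)$ with $(1+z)^9\nmid a(z)$. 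Via the necessary condition, it also shows only that the scheme is not $C^8$. The zero count cannot even exclude $C^7$, since $C^7$ requires only $(1+z)^8\mid a(z)$. Any genuine regularity statement needs a contractivity or joint-spectral-radius computation, and that computation gives $C^3$.
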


\begin{proof}
The four even-order sum rules at $n = 0, 2, 4, 6$ uniquely determine the four
free parameters (verified by exact rational arithmetic in the companion
notebook). An exact computation gives $a^{(8)}(-1) \neq 0$ and $a^{(k)}(-1) =
0$ for $k = 0, \ldots, 7$. Hence, the zero order at $z = -1$ is exactly $8$,
and Theorem~\ref{thm:cdm} gives $C^6$ limit curves.
\end{proof}

For the general degree-$(2m-1)$ stencil, the zero order of $a(z)$ at $z = -1$
is conjectured to be $2m$, which would give $C^{2m-2}$ regularity by
Theorem~\ref{thm:cdm}. This is verified for $m = 3$ (Theorem~\ref{thm:c4})
and $m = 4$ (Proposition~\ref{prop:deg7}). A general proof requires showing
that the $m \times m$ linear system defining the coefficients is non-singular
for all $m \geq 3$ and that the resulting symbol has a zero of order exactly
$2m$ at $z = -1$. Non-singularity has been verified computationally for
$m = 3, 4, 5, 6$ using exact rational arithmetic. The general statement
is left as a conjecture. The biharmonic energy of the limit curves is
expected to decay at a rate $O(2^{-n(2m-2)})$ per refinement level, consistent
with $C^{2m-2}$ regularity, though a general proof of this rate is also
left to future work. Higher-degree members of the hierarchy possess larger negative lobes in
their masks, which amplify ringing artefacts when applied to non-uniform or
near-degenerate polygons. The degree-5 stencil~\eqref{eq:stencilbox} provides
the most practical balance between fairness and robustness for most design
applications. The degree-7 stencil is appropriate when $C^6$ continuity is
required, though the connection to specific fourth-order PDE boundary
conditions on subdivision surfaces would require further analysis beyond
the scope of this paper.

% =============================================================================
\section{Conclusions}
\label{sec:conclusion}
% =============================================================================

This paper has introduced the biharmonic interpolatory subdivision scheme, a
6-point scheme that admits a biharmonic variational interpretation, providing
a fairness-based reading of the classical Deslauriers--Dubuc stencil and its
extension to non-Euclidean geometries. The main results are as follows. The stencil $[3,-25,150,150,-25,3]/256$ is the
unique 6-point interpolatory mask arising from quintic polynomial reproduction.
It is shown to admit a biharmonic variational interpretation as the minimiser
of a local discrete curvature-variation energy
(Theorem~\ref{thm:discrete_variational}), and it coincides with the
Deslauriers--Dubuc 6-point stencil as a consequence of the uniqueness of the
symmetric degree-5 solution. Its $C^4$ regularity is
established by exact symbol analysis, and the regularity is shown to be sharp.
Non-Euclidean extensions on $\Stwo$ and $\Htwo$ are constructed using a
second-order reduced governing model $\kappag'' = K\kappag$, motivated by the
fourth-order Euler--Lagrange structure on space forms. Closed-form solutions
of this reduced model yields explicit insertion angles, and an explicit
Taylor-series analysis produces an $O(|K|h^3)$ proximity bound that rigorously
implies $C^4$ convergence on constant-curvature surfaces via the
Wallner--Dyn--Grohs proximity framework. A systematic comparison against the
4-point DGL scheme and the 8-point degree-7 biharmonic scheme demonstrate
that the 6-point biharmonic scheme achieves lower fairness energy
and smoother curvature profiles than DGL across the reported benchmark suite,
while the 8-point member achieves lower asymptotic energy at the cost of wider
negative lobes and slower early-level convergence on non-uniform polygons.

The principal directions for future work are as follows. First, extending the
biharmonic principle to triangular and quadrilateral mesh subdivision for
surfaces in $\R^3$ requires replacing geodesic curvature with mean curvature,
yielding a nonlinear Euler--Lagrange equation whose discrete analogue is a
significant open problem. Second, the biharmonic energy may serve as a
refinement mechanism for adaptive subdivision, concentrating computational effort
near high-curvature regions without sacrificing the global fairness property.
Third, the non-Euclidean extensions assume constant sectional curvature. An
extension to variable-curvature surfaces would require numerical integration
of the ODE~\eqref{eq:bh_ode} at each refinement step, though the proximity
framework remains applicable in principle. Fourth, the stencil hierarchy of
Section~\ref{sec:hierarchy} may admit an interpretation as a sequence of
discrete thin-plate splines converging to the continuous thin-plate spline
in the refinement limit, a connection that could unify the subdivision and
radial basis function literature in the interpolatory setting.

\section*{Acknowledgements}
The authors acknowledge the computational resources provided by the Centre for
Visual Computing and Intelligent Systems at the University of Bradford.

% =============================================================================
\appendix
% =============================================================================

\section{Explicit Verification of Theorem~\ref{thm:discrete_variational}}
\label{sec:lagrange_verification}

This appendix provides the explicit rational computation that completes the
proof of Theorem~\ref{thm:discrete_variational} and verifies
Equation~\eqref{eq:numerator} in the manuscript.

\subsection*{A.1\quad Lagrange coefficients}

The degree-5 Lagrange basis polynomials through nodes $\{-2,-1,0,1,2,3\}$,
evaluated at the four half-integer insertion positions, give (with all
arithmetic exact over $\mathbb{Q}$),
\begin{align*}
  q_A \;(t=-3/2)&: \tfrac{1}{256}[63,\;315,\;{-210},\;126,\;{-45},\;7],\\
  q_B \;(t=-1/2)&: \tfrac{1}{256}[-7,\;105,\;210,\;{-70},\;21,\;{-3}],\\
  q_C \;(t=+3/2)&: \tfrac{1}{256}[-3,\;21,\;{-70},\;210,\;105,\;{-7}],\\
  q_D \;(t=+5/2)&: \tfrac{1}{256}[7,\;{-45},\;126,\;{-210},\;315,\;63],
\end{align*}
where the six entries are the coefficients of $[p_{-2},p_{-1},p_0,p_1,p_2,p_3]$.
Note by symmetry: $q_D$ is $q_A$ reversed, and $q_C$ is $q_B$ reversed.
Also, $q_B(t=-1/2) = [3,-25,150,150,-25,3]/256 \cdot r$ at $t=1/2$, confirming
that $L(\cdot;\,1/2) = [3,-25,150,150,-25,3]/256$ is the DD mask.

\subsection*{A.2\quad The curvature-difference coefficients}

Using these values in equations~\eqref{eq:D43}--\eqref{eq:D76},
\begin{align*}
  b_{43} &= \tfrac{1}{256}[-336, 0, 288, -1344, 432, -64]
    \cdot [p_{-2},\ldots,p_3]^\top,\\
  b_{54} &= \tfrac{1}{256}[28, -420, 2232, 1304, -84, 12]
    \cdot [p_{-2},\ldots,p_3]^\top,\\
  b_{65} &= \tfrac{1}{256}[-12, 84, -1304, -2232, 420, -28]
    \cdot \mathbf{p},\\
  b_{76} &= \tfrac{1}{256}[36, -252, 840, 552, -236, 84]
    \cdot \mathbf{p}.
\end{align*}

\subsection*{A.3\quad The minimiser}

The first-order condition~\eqref{eq:qstar_formula} gives,
\begin{align*}
  &4b_{43} - 12b_{54} + 12b_{65} - 4b_{76}\\
  &= \tfrac{1}{256}\bigl[
    4(-336) - 12(28) + 12(-12) - 4(36),\\
  &\qquad 4(0) - 12(-420) + 12(84) - 4(-252),\\
  &\qquad 4(288) - 12(2232) + 12(-1304) - 4(840),\\
  &\qquad 4(-1344) - 12(1304) + 12(-2232) - 4(552),\\
  &\qquad 4(432) - 12(-84) + 12(420) - 4(-236),\\
  &\qquad 4(-64) - 12(12) + 12(-28) - 4(84)
  \bigr]\\
  &= \tfrac{1}{256}[-1920, 6400, -38400, -38400, 6400, -1920]\\
  &= \tfrac{320}{256}[-6, 20, -120, -120, 20, -6].
\end{align*}
Dividing by $\sum a_k^2 = 320$,
\[
  q^* = \tfrac{1}{256}(3p_{-2} - 25p_{-1} + 150p_0 + 150p_1 - 25p_2 + 3p_3). \qquad\square
\]

\section{Python Implementation}
\label{sec:howto}

The biharmonic scheme requires three floating-point constants and approximately
twenty lines of code for a complete closed-polygon implementation. The listings
below provide self-contained Python code for the Euclidean case and for the
non-Euclidean insertion angle computation.

\subsection*{B.1\quad Euclidean subdivision}

\begin{lstlisting}[caption={Biharmonic 6-point interpolatory subdivision for
  closed or open polygons in $\R^d$. Each call to \texttt{biharmonic\_step}
  doubles the vertex count. Six to eight iterations yield a visually converged
  limit curve.}]
import numpy as np

ALPHA =  150 / 256   #  0.5859375
BETA  =  -25 / 256   # -0.09765625
GAMMA =    3 / 256   #  0.01171875

def biharmonic_step(pts, closed=True):
    n   = len(pts)
    idx = (lambda k: k % n) if closed else (
           lambda k: int(np.clip(k, 0, n - 1)))
    new = np.empty((2 * n, pts.shape[1]))
    for j in range(n):
        new[2*j]   = pts[j]
        new[2*j+1] = (
            GAMMA * pts[idx(j - 2)]
          + BETA  * pts[idx(j - 1)]
          + ALPHA * pts[idx(j    )]
          + ALPHA * pts[idx(j + 1)]
          + BETA  * pts[idx(j + 2)]
          + GAMMA * pts[idx(j + 3)])
    return new

def subdivide(pts, iters=6, closed=True):
    for _ in range(iters):
        pts = biharmonic_step(pts, closed)
    return pts
\end{lstlisting}

\subsection*{B.2\quad Non-Euclidean insertion angles}

\begin{lstlisting}[caption={Biharmonic insertion angle \eqref{eq:insertionangle}
  for a surface of constant curvature $K \in \{0, \pm 1\}$. For general $|K|$,
  replace the trigonometric arguments by $|K|^{1/2} \cdot \texttt{edge\_len}$.}]
def biharmonic_insertion_angle(kappa_j, kappa_jp1,
                                edge_len, K):
    ell = edge_len / 2.0
    if abs(K) < 1e-12:              # K = 0, Euclidean
        c1 = kappa_j
        c2 = (kappa_jp1 - kappa_j) / edge_len
        return c1 * ell + 0.5 * c2 * ell**2
    elif K > 0:                     # K = +1, spherical
        c1 = kappa_j
        c2 = ((kappa_jp1 - kappa_j * np.cosh(edge_len))
              / np.sinh(edge_len))
        return c1 * np.sinh(ell) + c2 * (np.cosh(ell) - 1.0)
    else:                           # K = -1, hyperbolic
        c1 = kappa_j
        c2 = ((kappa_jp1 - kappa_j * np.cos(edge_len))
              / np.sin(edge_len))
        return c1 * np.sin(ell) - c2 * (np.cos(ell) - 1.0)
\end{lstlisting}

The control polygon is supplied as an $(n \times d)$ array of $d$-dimensional
control points. Setting \texttt{closed=False} treats the polygon as an open
arc and applies boundary clamping at both endpoints. For spherical and
hyperbolic geometries, the complete geodesic midpoint computation and
perturbation machinery are provided in the companion code submitted with
this paper.

% =============================================================================

\end{document}